\tikzstyle{hvertex}=[thick,circle,inner sep=0.cm, minimum size=2mm, fill=white, draw=black]
\tikzstyle{hedge}=[ultra thin,draw opacity=.4]
\colorlet{hellgrau}{black!30!white}
\newtheorem{theorem}{Theorem}
\newtheorem{lemma}[theorem]{Lemma}
\newtheorem{proposition}[theorem]{Proposition}
\newtheorem{claim}{Claim}
\newtheorem*{claim*}{Claim}
\newtheorem{case}{Case}
\begin{document}

\onehalfspace

\title{Rainbow triangles and cliques in edge-colored graphs}
 
\author{ Stefan Ehard \and Elena Mohr}

\date{}

\maketitle

\begin{center}
Institut f\"{u}r Optimierung und Operations Research, 
Universit\"{a}t Ulm, Ulm, Germany,
\{\texttt{stefan.ehard,elena.mohr}\}\texttt{@uni-ulm.de}\\[3mm]
\end{center}

\begin{abstract}
For an edge-colored graph, a subgraph is called {\it rainbow} if all its edges have distinct colors.
We show that if $G$ is an edge-colored graph of order $n$ and size $m$ using $c$ colors on its edges, and $m+c\geq \binom{n+1}{2}+k-1$ for a non-negative integer $k$, then $G$ contains at least $k$ rainbow triangles.
For $n\geq 3k$, we show that this result is best possible, and we completely characterize the class of edge-colored graphs for which this result is sharp. Furthermore, we show that an edge-colored graph $G$ contains at least $k$ rainbow triangles if $\sum\limits_{v\in V(G)} d^c_G(v)\geq \binom{n+1}{2}+k-1$ where $d_G^c(v)$ denotes the number of distinct colors incident to a vertex $v$.

Finally we characterize the edge-colored graphs without a rainbow clique of size at least six that maximize the sum of edges and colors $m+c$.

Our results answer two questions of Fujita, Ning, Xu and Zhang [On sufficient conditions for rainbow cycles in edge-colored graph, arXiv:1705.03675, 2017]

\end{abstract}

{\small 
\begin{tabular}{lp{13cm}}
{\bf Keywords: } Rainbow; coloring; triangle; clique \end{tabular}
}

\section{Introduction}
In 1907, Mantel \cite{ma} proved that the maximum number of edges in a triangle-free graph on $n$ vertices is at most $\lfloor\frac{n^2}{4}\rfloor$, which was generalized by the famous theorem of Tur\'an \cite{tu} in 1941.
Rademacher showed in 1941 that a graph on $n$ vertices and at least $\lfloor\frac{n^2}{4}\rfloor+1$ edges contains not only one but already at least $\lfloor\frac{n}{2}\rfloor$ triangles.
In 1955, Erd\H{o}s\cite{er} extended Rademacher's result by showing that a graph on $n$ vertices and at least $\lfloor\frac{n^2}{4}\rfloor+k$ edges contains at least $k\lfloor\frac{n}{2}\rfloor$ triangles for $k<\min\{4,n/2\}$. 
Erd\H{o}s also conjectured the same to be true for $k< n/2$, which was later proved by Lov\'asz and Simonovits~\cite{losi}.
In the present paper, we investigate {\it rainbow} versions of these extremal problems.
A subgraph of an edge-colored graph is called {\it rainbow} if all its edges have different colors.
Erd\H{o}s, Simonovits and S\'os~\cite{ersiso} proved that a complete graph on $n$ vertices that is edge-colored with at least $n$ colors contains a rainbow triangle, which is best possible.
Considering the number of edges $m(G)$ of a graph $G$ and the number of colors $c(G)$ that are used to color the edges of $G$, Li, Ning, Xu, and Zhang~\cite{linixuzh} extended this and showed the following rainbow version of Mantel's theorem.
\begin{theorem}[Li et al.~\cite{linixuzh}]\label{thm:li}
If $G$ is an edge-colored graph on $n$ vertices such that $m(G)+c(G)\geq \binom{n+1}{2}$, then $G$ contains a rainbow triangle.
\end{theorem}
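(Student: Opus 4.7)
I propose proving Theorem~\ref{thm:li} by induction on $n$, arguing the contrapositive: every rainbow-triangle-free edge-colored graph on $n$ vertices satisfies $m+c\leq \binom{n+1}{2}-1$. The base case $n=3$ is immediate, since $m\leq 3$ and $c\leq m$ force any graph with $m+c\geq 6$ to be $K_3$ with three distinct colors and hence a rainbow triangle.

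For the inductive step, fix $n\geq 4$ and let $G$ be rainbow-triangle-free. For a vertex $v$, let $p(v)$ denote the number of colors all of whose edges meet $v$ (the \emph{private} colors at $v$). Deleting $v$ decreases $m$ by $d(v)$ and $c$ by exactly $p(v)$, and $G-v$ remains rainbow-triangle-free, so the inductive hypothesis applied to $G-v$ yields
\[
d(v)+p(v)\ \geq\ m+c-\binom{n}{2}+1 \qquad\text{for every } v\in V(G).
\]
Summing over $v$, using $\sum_v d(v)=2m$ together with the crucial bound $\sum_v p(v)\leq 2c$ (a color contributes $2$ if it is used on a single edge, $1$ if its edges form a nontrivial star, and $0$ otherwise), I obtain
\[
2(m+c)\ \geq\ 2m+\sum_v p(v)\ \geq\ n\!\left(m+c-\binom{n}{2}+1\right),
\]
which rearranges to $(n-2)(m+c)\leq n\bigl(\binom{n}{2}-1\bigr)=\tfrac{n(n-2)(n+1)}{2}$, hence $m+c\leq \binom{n+1}{2}$.

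The calculation delivers only $m+c\leq\binom{n+1}{2}$ rather than the sharp $m+c\leq\binom{n+1}{2}-1$, so the main remaining obstacle is to rule out equality. Tracing the tight case, equality in $\sum_v p(v)\leq 2c$ forces every color to appear on exactly one edge (else the star and non-star contributions are strictly smaller), whence $c=m$, and the tight summed bound then pins down $m=c=n(n+1)/4$. Since all edges now have distinct colors, every triangle in $G$ would be rainbow, so $G$ must be triangle-free. But $n(n+1)/4>\lfloor n^2/4\rfloor$ for $n\geq 4$, contradicting Mantel's theorem. Thus equality is impossible and the inductive step is complete. The delicate part of the plan is pinpointing the extremal structure (all singleton colors, $m=n(n+1)/4$) and then closing the loop via Mantel; once the algebraic identity $\binom{n}{2}-1=\tfrac{(n-2)(n+1)}{2}$ is exploited to eliminate the factor $(n-2)$, the remainder is clean.
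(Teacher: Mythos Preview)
Your proof is correct. The paper does not actually prove Theorem~\ref{thm:li}; it is quoted from Li, Ning, Xu, and Zhang and is used only as the base case $k=1$ in the proof of Theorem~\ref{thm:m+c}.

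It is worth noting that your argument is precisely the mechanism the paper deploys for its own generalization (Theorem~\ref{thm:m+c}): apply induction/minimality to $G-v$ to get a per-vertex lower bound on $d(v)+p(v)$ (the paper writes your $p(v)$ as the saturated degree $d_G^s(v)$), and then sum using $\sum_v d(v)=2m$ and $\sum_v p(v)\le 2c$. The difference lies in how the calculation closes. For $k\ge 2$, each of the $k-1$ rainbow triangles already present contributes three extra units to the summed lower bound, and the paper obtains $n(n+1)+3(k-1)\le 2(m+c)=n(n+1)+2(k-1)$, an outright contradiction. For $k=1$ that slack disappears and the sum yields only $m+c\le\binom{n+1}{2}$; your extra step of pinning down the equality case (every color class a singleton, forcing $G$ triangle-free with $m=n(n+1)/4>\lfloor n^2/4\rfloor$) and invoking Mantel is exactly what is needed to finish. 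In effect you have produced a self-contained proof of the $k=1$ case using the very scaffolding the paper erects for $k\ge2$, with Mantel's theorem replacing the role that the cited Theorem~\ref{thm:li} plays there.
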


Observe that there exist edge-colored (complete) graphs with $m(G)+c(G)=\binom{n+1}{2}-1$ that do not contain a rainbow triangle.
Fujita, Ning, Xu, and Zhang~\cite{funixuzh} characterized all extremal graphs $G$ for Theorem~\ref{thm:li} such that $m(G)+c(G)$ is maximum but $G$ contains at most one rainbow triangle.
In contrast to Rademacher's theorem, which states that a graph with  $\lfloor\frac{n^2}{4}\rfloor+1$ edges contains at least $\lfloor\frac{n^2}{4}\rfloor$ triangles, a graph $G$ with $m(G)+c(G)=\binom{n+1}{2}$ may contain only one rainbow triangle.
This motivates the question in~\cite{funixuzh} whether one can find a function $f(k)$ such that a graph $G$ on $n$ vertices with $m(G)+c(G)\geq \binom{n+1}{2}+f(k)$ necessarily contains $k$ rainbow triangles, which can be seen as a rainbow version of Erd\H{o}s' conjecture mentioned above. 
Our first result answers this question.

\begin{theorem}\label{thm:m+c}
If $G$ is an edge-colored graph on $n$ vertices such that $m(G)+c(G)\geq \binom{n+1}{2}+k-1$ for a positive integer $k$, then $G$ contains $k$ rainbow triangles.
\end{theorem}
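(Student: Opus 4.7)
The natural approach is induction on $k$. The base case $k=1$ is exactly Theorem~\ref{thm:li}. For the inductive step, assume the statement for $k-1$ and let $G$ be an edge-colored graph on $n$ vertices with $m(G)+c(G)\geq\binom{n+1}{2}+k-1$. Since $k\geq 2$, Theorem~\ref{thm:li} already supplies one rainbow triangle in $G$; the strategy is to find a rainbow triangle $T$ and an edge $e\in E(T)$ whose removal still yields a graph from which the inductive hypothesis extracts $k-1$ further rainbow triangles.

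\emph{Easy case.} Suppose some rainbow triangle $T$ contains an edge $e$ whose color appears on at least one other edge of $G$. Then $c(G-e)=c(G)$, so $m(G-e)+c(G-e)=m+c-1\geq\binom{n+1}{2}+(k-1)-1$. By induction, $G-e$ contains $k-1$ rainbow triangles, and together with $T$ (which uses the deleted edge $e$ and is therefore absent from $G-e$) we obtain $k$ rainbow triangles in $G$.

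\emph{Hard case.} Otherwise every rainbow triangle of $G$ has all three edges in singleton color classes. Let $U$ denote the set of edges whose color is used only once; then the rainbow triangles of $G$ coincide with the triangles of the subgraph $G[U]$, and no edge outside $U$ is in any rainbow triangle. A first subcase in which some $e\in U$ belongs to at least two rainbow triangles is still handled by deletion: here $m(G-e)+c(G-e)=m+c-2$, induction with parameter $k-2$ yields $k-2$ rainbow triangles in $G-e$, and the two rainbow triangles of $G$ through $e$ push the count back up to $k$. In the remaining subcase the rainbow triangles of $G$ are pairwise edge-disjoint, and a structural analysis takes over: for every rainbow triangle $xyz$ and every vertex $w$ adjacent to each of $x,y,z$, the three non-rainbow triangles $xyw$, $xzw$, $yzw$ force $c(xw)=c(yw)=c(zw)$. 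Applying this with $w$ lying in another rainbow triangle $T'$ produces a second edge of the same unique color as an edge of $T'$, contradicting uniqueness, so (under suitable adjacency) the rainbow triangles are vertex-disjoint; between two such triangles all nine joining edges share a common color, and the three edges from any outside vertex adjacent to a rainbow triangle likewise share one color.

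\emph{Main obstacle.} I expect the genuine difficulty to lie in the final step of the hard case: translating this rigid structure into a counting inequality of the form $m(G)+c(G)\leq\binom{n+1}{2}+t-1$, where $t$ is the number of vertex-disjoint rainbow triangles, which would force $t\geq k$. Two points make this delicate. First, the structural restrictions above only apply to vertices adjacent to all three vertices of a given rainbow triangle, so edges missing from $G$ have to be tracked and their absence charged against $m+c$. Second, unique colors used on edges outside all rainbow triangles contribute to $m+c$ but are not directly constrained by the structural argument; controlling them probably requires an auxiliary application of Theorem~\ref{thm:li} (or the inductive hypothesis) to the subgraph induced by the vertices not lying in any rainbow triangle.
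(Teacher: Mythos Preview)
Your proposal is incomplete, and you have correctly located the gap: in the ``hard case'' where all rainbow triangles have singleton-color edges and are pairwise edge-disjoint, the structural analysis you sketch does not close. The obstruction is real: the observations about common colors on edges from $w$ to a rainbow triangle require $w$ to be adjacent to all three of its vertices, so for non-complete $G$ you would need to account simultaneously for missing edges, for unique colors on edges outside rainbow triangles, and for possible vertex-overlaps between rainbow triangles. Turning this into an inequality $m(G)+c(G)\leq\binom{n+1}{2}+t-1$ is not straightforward, and your outline does not supply the mechanism.

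The paper sidesteps this entirely by working with \emph{vertices} rather than edges. Set up a minimal counterexample (smallest $k$, then smallest $n$, then smallest $m$); the edge-deletion trick you already used shows $m(G)+c(G)=\binom{n+1}{2}+k-1$ exactly. The crucial new ingredient is the \emph{saturated degree} $d_G^s(v)=c(G)-c(G-v)$, the number of colors that disappear when $v$ is removed. If $v$ lies in $\ell$ rainbow triangles, then deleting $v$ and applying minimality to $G-v$ forces
\[
d_G(v)+d_G^s(v)\geq n+\ell+1.
\]
Summing over all vertices, the left side is at most $2m(G)+2c(G)=n(n+1)+2(k-1)$ because $\sum_v d_G^s(v)\leq 2c(G)$ (each color can be ``saturated'' at no more than two vertices). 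The right side is at least $n(n+1)+3(k-1)$, since $G$ has $k-1$ rainbow triangles and each contributes $3$ to $\sum_v\ell(v)$. This gives $3(k-1)\leq 2(k-1)$, a contradiction for $k\geq 2$.

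So the missing idea is to replace the edge-deletion/structural route with a vertex-deletion/double-counting route via $d_G^s$. This cleanly absorbs all the bookkeeping (missing edges, repeated colors, overlapping triangles) into the single inequality $\sum_v d_G^s(v)\leq 2c(G)$.
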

For the complete graph $K_n$, this extends the above mentioned result of Erd\H{o}s, Simonovits and S\'os~\cite{ersiso} and implies the existence of $k$ rainbow triangles in $K_n$ if $c(K_n)\geq n+k-1$. 

To see that Theorem~\ref{thm:m+c} is best possible for $n\geq 3k$, consider the following edge-colored complete graph on $n\geq 3k$ vertices using $n+k-1$ distinct colors that contains only $k$ rainbow triangles:
\begin{itemize}
\item Let $G_0$ be the complete graph with vertex set $\{v_1,\dots,v_{n-3k}\}$, where the edge $v_iv_j$ is colored with color $i$ for $i<j$. 
\item For $i$ form 1 up to $k$ let $G_i$ arise from $G_{i-1}$ by adding three vertices that form a rainbow triangle, whose colors are not yet used in $G_{i-1}$. We add all edges between $G_{i-1}$ and the triangle and color them with another new color which is neither used in $G_{i-1}$ nor in the new triangle. 
\end{itemize}
It is easy to see that $G_k$ contains exactly $k$ rainbow triangles, and that $m(G)+c(G)=\binom{n}{2}+n-3k-1+4k=\binom{n+1}{2}+k-1$.

\begin{figure}[H]

\begin{center}
\begin{tikzpicture}[scale=1.5]

\def\krad{0.5}

\def\angle{360/3}
\begin{scope}[shift={(-0.5,-1.5)}];
\node[hvertex] (w1) at (270-\angle/2+0*\angle:\krad){};
\node[hvertex] (w2) at (270-\angle/2+1*\angle:\krad){};
\node[hvertex] (w3) at (270-\angle/2+2*\angle:\krad){};
\end{scope}

\begin{scope}[shift={(3.5,-1.5)}];
\node[hvertex] (x1) at (270-\angle/2+0*\angle:\krad){};
\node[hvertex] (x2) at (270-\angle/2+1*\angle:\krad){};
\node[hvertex] (x3) at (270-\angle/2+2*\angle:\krad){};
\end{scope}

\node[hvertex] (y1) at (0,0){};
\node[hvertex] (y2) at (1,0){};
\node[hvertex] (y3) at (2,0){};
\node[hvertex] (y4) at (3,0){};

\node (h1) at (0,-0.2){};
\node (h2) at (0.3,-0.2){};
\node (h3) at (0.6,-0.2){};
\node (h4) at (0.9,-0.2){};

\node (g1) at (-0.3,-1.2){};
\node (g2) at (0,-1.45){};
\node (g3) at (0.2,-1.7){};

\foreach \a in {1,...,4}
\foreach \x in {1,...,3}
    \draw[thick, color=orange] (h\a)--(g\x);

\node (f1) at (0.5,-1.1){};
\node (f2) at (0.5,-1.4){};
\node (f3) at (0.5,-1.7){};

\node (e1) at (2.5,-1.1){};
\node (e2) at (2.5,-1.4){};
\node (e3) at (2.5,-1.7){};

\foreach \a in {1,...,3}
\foreach \x in {1,...,3}
    \draw[thick, color=yellow] (e\a)--(f\x);

\node (i1) at (2,-0.2){};
\node (i2) at (2.3,-0.2){};
\node (i3) at (2.6,-0.2){};
\node (i4) at (2.9,-0.2){};

\node (j1) at (3.3,-1.2){};
\node (j2) at (3,-1.45){};
\node (j3) at (2.8,-1.7){};

\foreach \a in {1,...,4}
\foreach \x in {1,...,3}
    \draw[thick, color=yellow] (i\a)--(j\x);

\draw[ultra thick, color=red] (w1) -- (w2);
\draw[ultra thick, color=red!50!black] (w1) -- (w3);
\draw[ultra thick, color=red!20!white] (w2) -- (w3);

\draw[ultra thick, color=green] (x1) -- (x2);
\draw[ultra thick, color=green!50!black] (x1) -- (x3);
\draw[ultra thick, color=green!30!white](x2) -- (x3);
  
\draw[ultra thick, color=black!70!blue] (y1) -- (y2);
\draw[ultra thick,  bend left, color=black!70!blue] (y1) to (y3);
\draw[ultra thick, bend left,color=black!70!blue] (y1) to (y4);
\draw[ultra thick, color=blue] (y2) -- (y3);
\draw[ultra thick, color=blue, bend left] (y2) to (y4);
\draw[ultra thick, color=white!60!blue] (y3) -- (y4);

\end{tikzpicture}
\end{center}
\caption{The graph $G_2$ on $10$ vertices.}\label{fig1}
\end{figure}
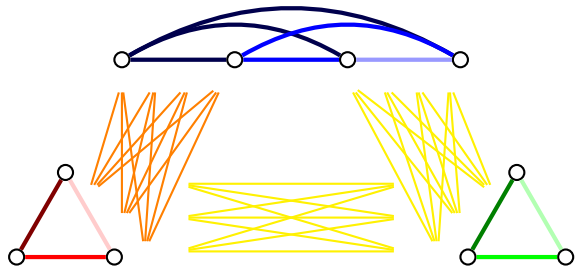

We can also characterize the graphs $G$ with $m(G)+c(G)\geq\binom{n+1}{2}+k-1$ that contain exactly $k$ rainbow triangles. This generalizes the main results of~\cite{funixuzh}, where this is proved for $k\in\{0,1\}$. 
To this end, we define the following class of graphs: 
Let $\mathcal{G}_k$ be the set of all edge-colored complete graphs that contain exactly $k$ rainbow triangles such that for every graph $G\in\mathcal{G}_k$ on $n$ vertices, $c(G)=n+k-1$, 
and either $G$ is a single vertex or a rainbow triangle, or there is a partition $(V_1,V_2)$ of the vertex set of $G$ such that the edges between $V_1$ and $V_2$ all have the same color, $G[V_1]\in\mathcal{G}_i$, and $G[V_2]\in\mathcal{G}_j$ for some non negative integers $i$ and $j$ with $i+j=k$.
Note that this definition implies that all rainbow triangles of a graph in $\mathcal{G}_k$ are vertex disjoint, and that the graph $G_k$ constructed above belongs to $\mathcal{G}_k$.

\begin{theorem}\label{thm:char}
If $G$ is an edge-colored graph on $n$ vertices with $n\geq3k$ for a non-negative integer $k$, $m(G)+c(G)\geq\binom{n+1}{2}+k-1$ and $G$ contains exactly $k$ rainbow triangles, then $G\in\mathcal{G}_k$.
\end{theorem}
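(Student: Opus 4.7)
The plan is to argue by induction on $n$. The base cases $n\le 3$ follow by direct enumeration of the possible edge-colorings. For the inductive step, the first observation is that applying Theorem~\ref{thm:m+c} with $k+1$ in place of $k$ forces $m(G)+c(G)=\binom{n+1}{2}+k-1$ exactly, since otherwise $G$ would contain at least $k+1$ rainbow triangles, contradicting the hypothesis. This tightness is the main lever throughout: any modification of $G$ that raises $m+c$ by $s$ must create at least $s$ new rainbow triangles.

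I would then show that $G$ is complete. Assuming a non-edge $uv$, adding $uv$ with a brand-new color yields a graph $G'$ with $m(G')+c(G')=m+c+2$, so by Theorem~\ref{thm:m+c} there are at least $k+2$ rainbow triangles in $G'$, that is, at least two rainbow triangles through $uv$; adding $uv$ with an appropriately chosen existing color produces at least one new rainbow triangle. A case analysis on the palettes at $u$ and $v$ and on their common neighbourhood then produces a contradiction. Completeness in turn forces $c(G)=n+k-1$.

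The crux of the proof is to identify a \emph{bridge color} $c_0$: a color whose edges form a spanning complete bipartite graph with parts $(V_1,V_2)$ and which is not used on any other edge of $G$. Every triangle crossing such a partition contains two edges of color $c_0$ and is therefore not rainbow, so each rainbow triangle of $G$ lies inside $G[V_1]$ or $G[V_2]$. I expect to find $c_0$ by considering a color class with the largest number of edges and invoking the tightness in Theorem~\ref{thm:m+c} to rule out any other structure, typically via a swap or recoloring that would strictly raise $m+c$ without producing a new rainbow triangle. Locating and validating the bridge color is the main obstacle of the proof.

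Once $c_0$ is fixed, set $n_i=|V_i|$ and let $k_i$ be the number of rainbow triangles in $G_i:=G[V_i]$, so $k_1+k_2=k$. The identity $\binom{n+1}{2}=\binom{n_1+1}{2}+\binom{n_2+1}{2}+n_1 n_2$ combined with Theorem~\ref{thm:m+c} applied to each $G_i$ forces $m(G_i)+c(G_i)=\binom{n_i+1}{2}+k_i-1$. After verifying $n_i\ge 3k_i$ (which follows from the vertex-disjointness of the rainbow triangles across the partition together with $n\ge 3k$), the induction hypothesis gives $G_i\in\mathcal{G}_{k_i}$, and the recursive definition of $\mathcal{G}_k$ immediately yields $G\in\mathcal{G}_k$.
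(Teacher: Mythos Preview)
Your plan diverges substantially from the paper's route, and the part you flag as ``the main obstacle'' is in fact not carried out. The paper never tries to locate a bridge color directly. Instead it first proves completeness via a saturated-degree double count (Lemma~\ref{lemma}), and then, for $k\ge 1$, picks a rainbow triangle $v_1v_2v_3$, shows (using Theorem~\ref{thm:m+c}) that each of its colors occurs on exactly one edge, and \emph{recolors} $v_1v_2$ with the color of $v_2v_3$. This kills precisely one rainbow triangle and drops $m+c$ by one, so the new graph $H$ lands in $\mathcal{G}_{k-1}$ by minimality of $k$. The bipartition of $H$ then is a bipartition of $G$, and one checks the triangle lies entirely in one side; induction finishes. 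The bridge color is thus inherited for free from the $\mathcal{G}_{k-1}$ structure rather than found from scratch.

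Your proposal has three concrete gaps. First, the completeness argument is not a proof: observing that adding $uv$ with a new color forces two rainbow triangles through $uv$, and with an old color forces one, does not by itself contradict anything about $G$; you have not said what the ``case analysis on the palettes'' actually shows. The paper's argument here is a global inequality $\sum_v(d_G(v)+d^s_G(v))\le 2m+2c$ together with a per-vertex lower bound $d_G(v)+d^s_G(v)\ge n+\ell+1$, and the latter uses an inductive step on $G-v$ plus the observation that $d^s_G(v)\ge \ell+2$ already gives $\binom{\ell+2}{2}>\ell$ rainbow triangles at $v$. Second, ``take the largest color class and swap/recolor'' is not a method; there is no reason the largest class should be complete bipartite and spanning, and tightness of $m+c$ only tells you that a recoloring which raises $m+c$ must create a rainbow triangle, not that the class has any particular shape. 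Third, your justification of $n_i\ge 3k_i$ is circular: vertex-disjointness of the rainbow triangles is a \emph{consequence} of membership in $\mathcal{G}_k$, which is exactly what you are trying to prove, and $n\ge 3k$ together with $k_1+k_2=k$ does not imply $n_i\ge 3k_i$ for each $i$. The paper sidesteps this by reducing primarily on $k$ rather than on $n$.
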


Instead of forcing rainbow triangles by a global condition, one can also do this by a local condition.
Li et al.~\cite{linixuzh} proved that for an edge-colored graph $G$ on $n$ vertices, $\sum\limits_{v\in V(G)} d^c_G(v)\geq \binom{n+1}{2}$ implies the existence of a rainbow triangle where $d^c_G(v)$ denotes the {\it color degree}, the number of different colors among the edges incident to a vertex $v$ in $G$.
Note that there are edge-colored graphs on $n$ vertices that fulfill the sum condition, $\sum\limits_{v\in V(G)} d^c_G(v)\geq \binom{n+1}{2}$, but contain exactly one rainbow triangle. One such graph arises from the graph $G_1$ as introduced above, by recoloring the edges between every $v_i\in\{v_1,\ldots,v_{n-4}\}$ and the rainbow triangle with color $i$.

We extend the result of~\cite{linixuzh} and provide a lower bound of the same type which guarantees the existence of $k$ rainbow triangles.

\begin{theorem}\label{thm:sum}
If $G$ is an edge-colored graph on $n$ vertices such that 
$\sum\limits_{v\in V(G)} d^c_G(v)\geq \binom{n+1}{2}+k-1$ 
for a positive integer $k$, then $G$ contains $k$ rainbow triangles.
\end{theorem}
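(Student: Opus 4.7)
The plan is to reduce Theorem~\ref{thm:sum} to Theorem~\ref{thm:m+c} by modifying the coloring of $G$. Specifically, I will construct an edge-colored graph $G'$ on the same vertex set such that every rainbow triangle of $G'$ is also a rainbow triangle of $G$, while $m(G') + c(G') \geq \sum_{v\in V(G)} d^c_G(v)$. Applying Theorem~\ref{thm:m+c} to $G'$ then immediately yields $k$ rainbow triangles in $G'$, and each of these corresponds to a rainbow triangle of $G$.

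To build $G'$, I would process each color class of $G$ independently. For a color $c$, let $E_c$ be its edge set and $V_c$ the set of vertices incident to $E_c$, and let $k_c$ denote the number of connected components of $E_c$. Choose any spanning forest $F_c$ of $E_c$; it consists of $k_c$ trees $T_c^1,\dots,T_c^{k_c}$ whose vertex sets partition $V_c$. In $G'$, delete every edge of $E_c \setminus F_c$ and assign each tree $T_c^i$ a fresh, globally distinct color. Since every color class of $G'$ is then a tree,
\[
m(G') + c(G') \;=\; \sum_{c,i}\bigl(|V(T_c^i)|-1\bigr) + \sum_c k_c \;=\; \sum_c |V_c| \;=\; \sum_{v\in V(G)} d^c_G(v),
\]
which by assumption is at least $\binom{n+1}{2}+k-1$.

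The main thing to verify is that no new rainbow triangles appear. Suppose $\Delta$ is a rainbow triangle in $G'$. If two of its edges $e,e'$ had the same color $c$ in $G$, then—since they share a vertex of $\Delta$—they lie in the same connected component of $E_c$, and hence received identical new colors in $G'$, contradicting rainbowness of $\Delta$. Therefore all three edges of $\Delta$ already carried distinct colors in $G$, so $\Delta$ is a rainbow triangle of $G$. Combined with Theorem~\ref{thm:m+c} applied to $G'$, this finishes the argument.

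The only nontrivial step is this last verification; everything else is bookkeeping. It rests on the elementary observation that two edges of a common color sitting inside a triangle must share a vertex and thus lie in the same component of their color class, which is exactly what prevents the refinement from ever creating a rainbow triangle that was not already present in $G$.
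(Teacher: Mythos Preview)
Your proof is correct and considerably more direct than the paper's. Whereas the paper sets up a minimal counterexample, establishes several structural claims (edge-disjointness of rainbow triangles, absence of monochromatic $P_4$'s, etc.), orients the graph, and passes through an auxiliary lemma on directed triangles (Lemma~\ref{thm:directed}) before finally invoking Theorem~\ref{thm:m+c}, you bypass all of this machinery with a single recoloring step. Replacing each color class by a spanning forest whose trees receive fresh colors makes every color class of $G'$ a tree, so the identity $m(G')+c(G')=\sum_c|V_c|=\sum_v d^c_G(v)$ is a one-line double count, and the key observation---that two edges of a triangle sharing a color must share a vertex and hence lie in the same tree of the forest---is exactly what prevents the refinement from manufacturing new rainbow triangles. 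Your argument is shorter and more elementary; the paper's detour through oriented graphs yields Lemma~\ref{thm:directed} as a by-product of possible independent interest, but for Theorem~\ref{thm:sum} itself your reduction is the cleaner route.
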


Similarly as Tur\'an generalized Mantel's theorem, it is natural to ask whether one can generalize the previous results concerning rainbow triangles to larger rainbow cliques.
In fact, Xu, Hu, Wang, and Zhang~\cite{xuhuwazh} recently extended a result of Montellano-Ballesteros and Neumann-Lara\cite{mone} as well as Schiermeyer~\cite{sc} in the following way. 
We write $t_{n,k}$ for the number of edges in the Tur\'an graph $T_{n,k}$, the complete $k$-partite graph on $n$ vertices whose sizes of the partite sets are as equal as possible.
\begin{theorem}[Xu et al.~\cite{xuhuwazh}]\label{thm:xucliques}
If $G$ is an edge-colored graph on $n$ vertices and $n\geq k\geq 4$ such that $m(G)+c(G)\geq \binom{n}{2}+t_{n,k-2}+2$, then $G$ contains a rainbow $K_k$.
\end{theorem}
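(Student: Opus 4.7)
I plan to proceed by induction on $k$, arguing by contrapositive: assume $G$ contains no rainbow $K_k$ and aim to show $m(G) + c(G) \leq \binom{n}{2} + t_{n,k-2} + 1$. For the base case $k=4$, Theorem~\ref{thm:li} applies because $\binom{n}{2} + \lfloor n^2/4\rfloor + 2 \geq \binom{n+1}{2}$ holds for $n \geq 4$, delivering a rainbow triangle. For $k \geq 5$, since $t_{n,k-3} < t_{n,k-2}$, the inductive hypothesis at level $k-1$ is applicable and produces a rainbow $K_{k-1}$. In either case I fix a rainbow $K_{k-1}$ subgraph $T$ on vertex set $S \subseteq V(G)$, with color set $C(T)$ of size $\binom{k-1}{2}$.

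For every $v \in V(G) \setminus S$, the extension $T+v$ must fail to be a rainbow $K_k$, so at least one of the following \textit{defects} must occur: \textit{(i)} some edge $vx$ with $x\in S$ is absent from $G$; \textit{(ii)} two edges $vx, vy$ with $x,y\in S$ share a color; \textit{(iii)} some edge $vx$ with $x\in S$ has a color lying in $C(T)$. Writing $d_S(v)$ and $d_S^c(v)$ for the degree and color-degree of $v$ into $S$, define
\[
D(v) \;=\; (k-1-d_S(v)) + (d_S(v)-d_S^c(v)) + |\{x\in S : \operatorname{col}(vx)\in C(T)\}|.
\]
Each $v$ contributes $D(v)\geq 1$, so $\sum_{v\notin S} D(v) \geq n-k+1$.

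Next I would interpret the three terms. The first sums to $(k-1)(n-k+1) - e_G(S, V\setminus S)$, measuring missing $S$-to-outside edges, bounded by $\binom{n}{2}-m$. The second counts repeated-color incidences at $S$, contributing to the $m-c$ slack. The third tallies edges between $S$ and $V\setminus S$ whose color already appears inside $T$; this is bounded by the total size of the $\binom{k-1}{2}$ color classes in $C(T)$, and becomes controllable by choosing $T$ to avoid large color classes. Combining these with $m \leq \binom{n}{2}$ and standard Turán-type estimates should yield the target $m+c \leq \binom{n}{2} + t_{n,k-2}+1$.

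The main obstacle is closing the gap between the linear lower bound $n-k+1$ on the defect sum and the quadratic Turán-type saving needed. To overcome this, I expect one must not work with a single rainbow $K_{k-1}$, but rather average the defect argument over all such $T$ in $G$, or else pick $T$ minimizing a weighted defect that explicitly penalizes defect \textit{(iii)}. The natural extremal example---$K_n$ with a Turán-colored base plus a single reused color inside each part---shows that defect \textit{(iii)} is the tight obstruction: every vertex outside a rainbow $K_{k-1}$ fails extension through a color clash with $T$. Matching this structure precisely, and turning the vertex-wise linear slack into a quadratic bound, is the delicate heart of the argument.
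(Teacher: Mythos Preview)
This theorem is not proved in the present paper at all: it is quoted as a result of Xu, Hu, Wang, and Zhang~\cite{xuhuwazh} and then used as a black box in the proofs of Lemmas~\ref{lem:rbTuran} and~\ref{lem:Tnkcomp}. There is therefore no in-paper argument to compare against.

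Evaluating your proposal on its own, there is a genuine gap, and you already name it. The defect inequality $\sum_{v\notin S} D(v)\ge n-k+1$ is only linear in $n$, whereas the conclusion you need---for complete $G$, say, that $c(G)\le t_{n,k-2}+1$---sits $\Theta(n^2/k)$ below the trivial bound $c(G)\le\binom{n}{2}$. If one follows your three term-wise upper bounds, the resulting inequality is essentially
\[
n-k+1 \;\le\; \Bigl(\tbinom{n}{2}-m\Bigr) + \bigl(\text{colour-repetition term}\bigr) + \bigl(\text{total size of the colour classes in }C(T)\bigr),
\]
which, even after choosing $T$ to make the last term $O(n)$, yields only $c\le \binom{n}{2}-(n-k+1)+O(n)$, nowhere near the Tur\'an strength required.

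Your proposed fixes do not close this gap. In the extremal graph $H_{n,k-2}$ every rainbow $K_{k-1}$ necessarily contains the unique repeated colour (two of its $k-1$ vertices must lie in the same part of the underlying $T_{n,k-2}$), so defect~(iii) is triggered uniformly no matter which $T$ you pick; averaging over all rainbow $K_{k-1}$'s therefore gains nothing, and optimizing the choice of $T$ cannot avoid the large colour class. A one-clique defect count is simply too coarse to recover the anti-Ramsey number for $K_k$. The known proofs instead build on the determination of that anti-Ramsey number for complete host graphs (Montellano-Ballesteros--Neumann-Lara~\cite{mone}, Schiermeyer~\cite{sc}) and then reduce the general case to it; your sketch contains no mechanism of comparable strength.
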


As noticed in~\cite{sc, xuhuwazh}, this bound is sharp because of the following example. Consider a complete graph $H_{n,k-2}$ on $n$ vertices. 
Let $T_{n,k-2}$ be a subgraph of $H_{n,k-2}$ where all edges are colored differently, and all remaining edges in $H_{n,k-2}$ are colored alike with one further color. 

Answering another question of \cite{funixuzh}, we can in fact show that these are essentially all the extremal graphs if $k\geq6$, except when one partite set of $T_{n,k-2}$ has size 1.
To this end, let $\mathcal{H}_k$ be the set of all edge-colored graphs such that for every graph $H\in\mathcal{H}_k$ on $n$ vertices either
\begin{enumerate}[label=(\Roman*)]
\item\label{i} $H$ is isomorphic to $H_{n,k-2}$ or
\item\label{ii} $\lfloor n/(k-2)\rfloor=1$, $H$ is complete, $c(H)=t_{n,k-2}+1$, and $H$ contains a rainbow $T_{n,k-2}$ as a subgraph but no rainbow $K_k$.  
\end{enumerate} 

\begin{figure}[H]
\begin{center}
\begin{tikzpicture}

{\begin{scope}[]
\tikzstyle{enddot}=[circle,inner sep=0cm, minimum size=10pt,color=hellgrau,fill=hellgrau]
\tikzstyle{markline}=[draw=hellgrau,line width=10pt]

\def\csizes{{3,2,2,2,2}} 
\def\cnum{4}         
\def\radius{2}

\def\startangle{90}

\pgfmathsetmacro{\cnumminusone}{\cnum-1}
\pgfmathsetmacro{\classrange}{360/(\cnum+1)}


\foreach \i in {0,...,\cnum}{
  \pgfmathparse{\csizes[\i]}
  \pgfmathsetmacro{\gap}{\classrange/(\pgfmathresult+1)}
  
  \pgfmathsetmacro{\markangle}{0.8*\classrange}  

  \pgfmathsetmacro{\sangle}{\startangle+\i*\classrange+\classrange+0.1*\classrange}  
  \draw[markline] (\sangle:\radius) arc [radius=\radius, start angle=\sangle, delta angle=\markangle];

  \foreach \vx in {1,...,\pgfmathresult}{
    \node[hvertex] (a\i b\vx) at (\startangle+\i*\classrange+\classrange+\vx*\gap:\radius){};
  }
}

\foreach \i in {0,...,\cnumminusone}{
  \pgfmathtruncatemacro{\plusone}{\i+1}
  \foreach \j in {\plusone,...,\cnum}{
     \pgfmathparse{\csizes[\i]}
     \foreach \u in {1,...,\pgfmathresult}{
       \pgfmathparse{\csizes[\j]}
        \foreach \v in {1,...,\pgfmathresult}{
           \draw[hedge] (a\i b\u) -- (a\j b\v);
        }
     }
  }
}
\draw[ultra thick,red] (a0b1) -- (a0b2);
\draw[ultra thick,red] (a0b2) -- (a0b3);
\draw[ultra thick,bend right=40,red] (a0b1) to (a0b3);

\draw[ultra thick,red] (a1b1) -- (a1b2);

\draw[ultra thick,red] (a2b1) -- (a2b2);

\draw[ultra thick,red] (a3b1) -- (a3b2);

\draw[ultra thick,red] (a4b1) -- (a4b2);
\end{scope}}

{\begin{scope}[shift={(5cm,0cm)}]
\tikzstyle{enddot}=[circle,inner sep=0cm, minimum size=10pt,color=hellgrau,fill=hellgrau]
\tikzstyle{markline}=[draw=hellgrau,line width=10pt]

\def\csizes{{2,2,2,1,1,1}} 
\def\cnum{4}         
\def\radius{2}

\def\startangle{90}

\pgfmathsetmacro{\cnumminusone}{\cnum-1}
\pgfmathsetmacro{\classrange}{360/(\cnum+1)}


\foreach \i in {0,...,\cnum}{
  \pgfmathparse{\csizes[\i]}
  \pgfmathsetmacro{\gap}{\classrange/(\pgfmathresult+1)}
  
  \pgfmathsetmacro{\markangle}{0.8*\classrange}  

  \pgfmathsetmacro{\sangle}{\startangle+\i*\classrange+\classrange+0.1*\classrange}  
  \draw[markline] (\sangle:\radius) arc [radius=\radius, start angle=\sangle, delta angle=\markangle];

  \foreach \vx in {1,...,\pgfmathresult}{
    \node[hvertex] (a\i b\vx) at (\startangle+\i*\classrange+\classrange+\vx*\gap:\radius){};
  }
}

\foreach \i in {0,...,\cnumminusone}{
  \pgfmathtruncatemacro{\plusone}{\i+1}
  \foreach \j in {\plusone,...,\cnum}{
     \pgfmathparse{\csizes[\i]}
     \foreach \u in {1,...,\pgfmathresult}{
       \pgfmathparse{\csizes[\j]}
        \foreach \v in {1,...,\pgfmathresult}{
           \draw[hedge] (a\i b\u) -- (a\j b\v);
        }
     }
  }
}

\draw[ultra thick,blue] (a2b1) -- (a2b2);
\draw[ultra thick,blue] (a2b2) -- (a3b1);

\draw[ultra thick,color=green!50!black] (a1b1) -- (a1b2);
\draw[ultra thick,color=green!50!black] (a1b1) -- (a4b1);

\draw[ultra thick,red] (a0b1) -- (a0b2);

\end{scope}}

\end{tikzpicture}
\end{center}
\caption{\onehalfspacing Two edge-colored complete graphs without a rainbow $K_7$ such that $m(G)+c(G)=\binom{n}{2}+t_{n,5}+1$. 
The thin lines represent the edges of a rainbow $T_{n,5}$ and use different colors than the colors shown in the figure. 
The left graph $H_{11,5}$ corresponds to case \ref{i}, and the right one to case \ref{ii} for $n=8$ where two edges within partite sets of size 2 are colored with a color of an edge incident to a partite set of size 1.}\label{fig2}
\end{figure}
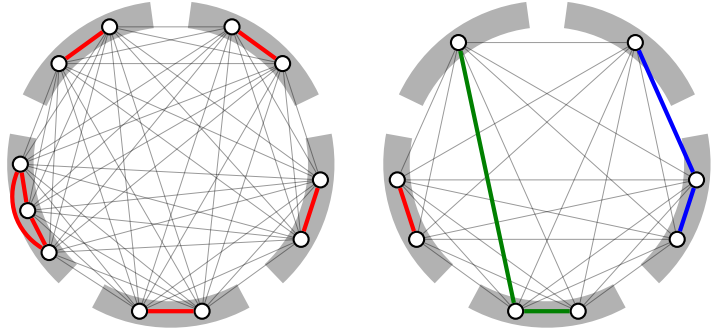

Note that all graphs in $\mathcal{H}_k$ contain a rainbow $T_{n,k-2}$ and are complete graphs.
Case \ref{ii} attributes the case, where partite sets of size 1 allow to suitably color edges within the partite sets of size 2 with colors that are used in the rainbow $T_{n,k-2}$.

Our third main result is as follows.

\begin{theorem}\label{thm:Kkchar}
If $G$ is an edge-colored graph on $n$ vertices and $n\geq k\geq 6$ such that $m(G)+c(G)=\binom{n}{2}+t_{n,k-2}+1$ and $G$ does not contain a rainbow $K_k$, then $G\in\mathcal{H}_k$. 
\end{theorem}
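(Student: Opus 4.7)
The plan is to prove $G\in\mathcal{H}_k$ in three stages: first extract a spanning rainbow copy of $T_{n,k-2}$ from $G$, then show $G$ is complete, and finally analyze the intra-partite edges to distinguish cases \ref{i} and \ref{ii}.

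For the first stage, observe that $m(G)\le\binom{n}{2}$ together with the hypothesis gives $c(G)\ge t_{n,k-2}+1$, so picking one edge per color yields a rainbow subgraph $R$ of $G$ with $|R|\ge t_{n,k-2}+1$. Since $G$ has no rainbow $K_k$, this $R$ is $K_k$-free, but since $|R|>t_{n,k-2}$ it cannot be $K_{k-1}$-free (Tur\'an's theorem). I would then argue, by exchanging representatives across color classes and invoking the extremal form of Tur\'an's theorem, that a rainbow spanning $T_{n,k-2}$-subgraph of $G$ exists: color collisions that prevent the Tur\'an structure can be broken by re-choosing representatives or locally modifying the partition.

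For the second stage, suppose for contradiction that $uv$ is a non-edge of $G$. Because the spanning rainbow $T_{n,k-2}$ lies in $G$, the vertices $u$ and $v$ must share a partite class $V_a$. Adding $uv$ with a fresh color yields $G^+$ with $m(G^+)+c(G^+)\ge\binom{n}{2}+t_{n,k-2}+3$, so Theorem \ref{thm:xucliques} produces a rainbow $K_k$ in $G^+$ necessarily using $uv$, so that $G$ contains a rainbow $K_k-uv$ on a vertex set $\{u,v,w_1,\dots,w_{k-2}\}$. The heart of this stage is then converting this rainbow $K_k-uv$ into a rainbow $K_k$ inside $G$: when $|V_a|\ge 3$ this can be done by replacing $u$ by another vertex $u'\in V_a$ and substituting a Tur\'an edge of a color unused so far, exploiting $k\ge 6$ to guarantee available colors; when $|V_a|=2$ the argument is more delicate and uses the rigidity of the rainbow $T_{n,k-2}$ to find an alternative transversal completing to a rainbow $K_k$.

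For the third stage, with $G=K_n$ established, let $V_1,\dots,V_{k-2}$ denote the partition of the rainbow $T_{n,k-2}$, let $c_1,\dots,c_{t_{n,k-2}}$ be the colors of its edges, and let $c_0$ be the remaining color of $G$. If every intra-class edge has color $c_0$, then $G=H_{n,k-2}$, giving case \ref{i}. Otherwise some intra-class edge $xy\in V_a$ has a Tur\'an color $c_j$; I would then show that whenever some $V_i$ has size at least $3$ or too many $V_i$ have size at least $2$, there is a transversal through $\{x,y\}$ whose completion to $K_k$ (using $xy$ and cross-partite edges of colors other than $c_j$) is rainbow, contradicting the assumption. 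This forces $\lfloor n/(k-2)\rfloor=1$ and yields case \ref{ii}. The main obstacle throughout is the second stage: Theorem \ref{thm:xucliques} only delivers a rainbow $K_k-uv$ across a non-edge, and producing the missing edge requires the rigid Tur\'an partition from the first stage together with the color freedom guaranteed by $k\ge 6$.
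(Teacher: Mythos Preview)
Your three-stage outline has the right architecture, but the order in which you run Stages~1 and~2 creates a genuine gap that the paper avoids.

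\textbf{Stage~1 is not a proof.} From $c(G)\ge t_{n,k-2}+1$ you obtain a rainbow $K_k$-free subgraph $R$ with more than $t_{n,k-2}$ edges; Tur\'an then gives a rainbow $K_{k-1}$ in $R$, but nothing close to a \emph{spanning} rainbow $T_{n,k-2}$. A $K_k$-free graph with $t_{n,k-2}+1$ edges can be very far from $(k-2)$-partite, and ``exchanging representatives across color classes'' is not an argument: different color classes may share vertices arbitrarily, and you have no control over which pairs are edges. Worse, at this point you have not yet shown $G$ is complete, so even if you produce a balanced $(k-2)$-partition, cross-partite pairs may simply be non-edges of $G$. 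The paper's Lemma~\ref{lem:rbTuran} establishes the rainbow $T_{n,k-2}$ only for \emph{complete} $G$ and does so by a nontrivial induction on $n$ using saturated degrees $d^s_G(v)=c(G)-c(G-v)$ and a double-counting inequality $\sum_v d^s_G(v)\le 2c(G)$; your sketch contains none of this machinery.

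\textbf{Stage~2 relies on Stage~1.} Your completeness argument needs the rainbow $T_{n,k-2}$ to locate the non-edge inside a partite class and to supply the replacement vertex $u'$. Since Stage~1 is unproved, Stage~2 collapses with it. Even granting a rainbow $T_{n,k-2}$, the step ``replace $u$ by $u'\in V_a$'' does not obviously preserve rainbowness: the new edges $u'w_j$ could repeat colors of the surviving $K_{k-1}$, and you give no mechanism to avoid this. The paper instead proves completeness first (Lemma~\ref{lem:Tnkcomp}), again by induction on $n$: a vertex $v$ is removed, induction gives $G-v$ complete, then Lemmas~\ref{lem:rbTuran} and~\ref{lem:kgeq5} on $G-v$ supply the rigid Tur\'an structure needed to force $d_G(v)=n$. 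Only after completeness is secured does the paper extract the rainbow $T_{n,k-2}$ in $G$ itself. Your Stage~3 is essentially Lemma~\ref{lem:kgeq5} and is the one part that matches the paper. In short: reverse the order of Stages~1 and~2, and replace the hand-waving in both by inductive arguments on $n$ driven by the saturated-degree inequality.
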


We introduce some additional notation.
Throughout this paper we consider finite, simple, (not necessarily properly) edge-colored graphs, usually of order $n$, and use standard terminology. 
Let $d^s_G(v)$ denote the {\it saturated degree} of a graph $G$, $d^s_G(v)=c(G)-c(G-v)$, that is the number of unique colors incident to a vertex $v$. A {\it v-saturated edge} $e$ is an edge incident with $v$ such that the color of $e$ is not used in $G-v$.
We will frequently use the fact that $\sum\limits_{v\in V(G)}d^s_G(v) \leq 2c(G)$.

The proofs of Theorems~\ref{thm:m+c}, \ref{thm:char} and \ref{thm:sum} are given in Section~\ref{sec:2}. 
Theorem~\ref{thm:Kkchar} is proved in Section~\ref{sec:3}.

\section{Rainbow triangles in edge-colored graphs}\label{sec:2}
We begin with the proof of Theorem~\ref{thm:m+c}.
\begin{proof}[Proof of Theorem~\ref{thm:m+c}]
Suppose, for a contradiction, that the statement is false, and let $G$ be a counterexample chosen such that 
\begin{enumerate}[label=(\roman*)]
\item $k$ is as small as possible,
\item subject to (i), the number of vertices $n$ is as small as possible, and 
\item subject to (i) and (ii), the number of edges $m(G)$ is as small as possible.
\end{enumerate}

By Theorem~\ref{thm:li}, we know that $k\geq2$, and thus by the choice of $G$, we can find at least $k-1$ rainbow triangles in $G$.
\begin{claim}\label{claim1}
$m(G)+c(G)=\binom{n+1}{2}+k-1$.
\end{claim}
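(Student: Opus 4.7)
The plan is to argue by contradiction. Suppose that
\[
m(G)+c(G)\;\geq\;\binom{n+1}{2}+k.
\]
I would try to construct a graph $G'$ on the same vertex set $V(G)$ with strictly fewer edges and still satisfying $m(G')+c(G')\geq\binom{n+1}{2}+k-1$. By the minimality of $m(G)$ imposed in~(iii), such a $G'$ cannot itself be a counterexample, so it must contain $k$ rainbow triangles; since $G'\subseteq G$ and the colorings agree, these would also be rainbow triangles of $G$, contradicting the choice of~$G$.

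The main step is a one-edge deletion. I would pick an edge $e\in E(G)$ whose color appears on some other edge of $G$. For such an $e$, the graph $G':=G-e$ has $c(G')=c(G)$ and $m(G')=m(G)-1$, so
\[
m(G')+c(G')\;=\;m(G)+c(G)-1\;\geq\;\binom{n+1}{2}+k-1,
\]
and $G'$ is a strictly smaller graph on the same vertex set, which triggers the contradiction above.

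The only situation not covered by this reduction is when \emph{every} color of $G$ is used on exactly one edge; then $c(G)=m(G)$ and $2m(G)\geq\binom{n+1}{2}+k$, and in this rainbow setting every triangle of $G$ is automatically a rainbow triangle, so it suffices to produce $k$ triangles in $G$ viewed as a plain graph. Since $k\geq 2$ by Theorem~\ref{thm:li} and the minimality~(i), the bound $m(G)\leq\binom{n}{2}$ forces $n\geq 4$, and $m(G)\geq\tfrac14(n(n+1)+2k)$ exceeds $\lfloor n^2/4\rfloor$ by at least $\tfrac14(n+2k)$. A Rademacher--Erd\H{o}s--Lov\'asz--Simonovits-type lower bound on the number of triangles of a dense graph then produces at least $k$ triangles in $G$, yielding the desired contradiction.

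\textbf{Main obstacle.} The case distinction is very unbalanced: when some color is shared the reduction is a one-line minimality argument, but the rainbow case (all colors distinct) is genuinely different, because edge deletion there drops $m+c$ by two rather than one. The remaining work lies in importing an external triangle-count bound and checking that the inequality $2m(G)\geq\binom{n+1}{2}+k$ really forces at least $k$ triangles throughout the admissible range $n\geq 4$, $2\leq k\leq n(n-3)/2$.
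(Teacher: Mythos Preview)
Your first case (delete an edge whose color is repeated, invoke minimality~(iii)) is fine, but the second case is a genuine gap: you do not actually verify that $2m(G)\geq\binom{n+1}{2}+k$ forces $k$ triangles over the full range $2\leq k\leq n(n-3)/2$, and you would need to import and carefully apply Kruskal--Katona or the full Lov\'asz--Simonovits theorem to do so. That is a lot of external machinery for what should be a one-line reduction inside an induction.

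The paper sidesteps the case distinction entirely by using minimality~(i) rather than~(iii). Recall that, by the choice of $G$, it already contains at least $k-1\geq 1$ rainbow triangles. Pick an edge $e$ lying in one of them. Deleting $e$ drops $m+c$ by at most $2$, so
\[
m(G-e)+c(G-e)\;\geq\;\binom{n+1}{2}+k-2\;=\;\binom{n+1}{2}+(k-1)-1.
\]
Since $k$ was chosen minimal, the theorem holds for parameter $k-1$, hence $G-e$ contains at least $k-1$ rainbow triangles; together with the rainbow triangle through $e$ (which is not present in $G-e$), $G$ has at least $k$ rainbow triangles, a contradiction.

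The point you missed is that you do not need the deletion to preserve $c(G)$: allowing $m+c$ to drop by $2$ is harmless once you appeal to the theorem for $k-1$ instead of insisting on the same $k$. This makes the ``every color is unique'' case evaporate and removes any need for supersaturation results.
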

\begin{proof}
Suppose, for a contradiction, that $m(G)+c(G)\geq\binom{n+1}{2}+k$ and let $e$ be an edge of a rainbow triangle in $G$. Since $m(G-e)+c(G-e)\geq\binom{n+1}{2}+k-2$, $G-e$ contains at least $k-1$ rainbow triangles by the choice of $G$. 
Hence, $G$ contains at least $k$ rainbow triangles, a contradiction.
\end{proof}

\begin{claim}\label{claim2}
For every vertex $v$ of $G$ that is contained in $\ell\geq0$ rainbow triangles, $d_G(v)+d_G^s(v)\geq n+\ell+1.$
\end{claim}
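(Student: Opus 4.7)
The plan is to delete $v$ from $G$ and exploit the minimality built into the choice of $G$. Set $G' := G - v$ and $k' := k - \ell$. Since $G$ is a counterexample to Theorem~\ref{thm:m+c}, $v$ cannot lie in $k$ or more rainbow triangles, so $\ell \leq k-1$ and hence $k' \geq 1$. The first step is to argue that Theorem~\ref{thm:m+c} is available for $G'$ with parameter $k'$: when $\ell \geq 1$ we have $k' < k$ and the minimality of $k$ applies; when $\ell = 0$ we have $k' = k$ but $|V(G')| = n - 1 < n$, so the minimality of $n$ applies.

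The observation preceding Claim~\ref{claim1} (together with the fact that $G$ is a counterexample) shows that $G$ has exactly $k - 1$ rainbow triangles. Since $\ell$ of these pass through $v$, the graph $G'$ contains precisely $k - 1 - \ell$ rainbow triangles, which is strictly less than $k'$. Applying the contrapositive of Theorem~\ref{thm:m+c} to $G'$ then yields
\[
m(G') + c(G') \leq \binom{n}{2} + k - \ell - 2.
\]

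Finally, from $m(G') = m(G) - d_G(v)$, $c(G') = c(G) - d_G^s(v)$, and Claim~\ref{claim1}, one has
\[
m(G') + c(G') = \binom{n+1}{2} + k - 1 - d_G(v) - d_G^s(v).
\]
Combining the two displays and using $\binom{n+1}{2} - \binom{n}{2} = n$ gives $d_G(v) + d_G^s(v) \geq n + \ell + 1$, as desired. The only point requiring care is the bookkeeping for which form of minimality is invoked in each range of $\ell$ (including the boundary case $k' = 1$, for which Theorem~\ref{thm:li} supplies the needed statement directly); once that case split is laid out, nothing else is expected to be an obstacle.
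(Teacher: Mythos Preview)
Your argument is correct and follows essentially the same route as the paper's: both exploit the minimality of $G$ applied to $G-v$ with parameter $k-\ell$ to force the desired inequality. The only cosmetic differences are that the paper argues by contradiction (assuming $d_G(v)+d_G^s(v)\le n+\ell$ and deducing $\ge k-\ell$ rainbow triangles in $G-v$) whereas you run the contrapositive directly, and you invoke the exact count of $k-1$ rainbow triangles in $G$ where the upper bound alone would already suffice.
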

\begin{proof}
Suppose, for a contradiction, that there is a vertex $v$ that is contained in $\ell$ rainbow triangles and $d_G(v)+d_G^s(v)\leq n+\ell$.
Since $m(G-v)+c(G-v)=m(G)+c(G)-d_G(v)-d_G^s(v)
\geq 
\binom{n}{2}+k-\ell-1$, $G-v$ contains at least $k-\ell$ rainbow triangles by the choice of $G$.
Hence, $G$ contains at least $k$ rainbow triangles, a contradiction.
\end{proof}

Since $G$ contains $k-1$ rainbow triangles, Claims~\ref{claim1} and~\ref{claim2} imply that
$$n(n+1)+3(k-1)\leq \sum\limits_{v\in V(G)} \Big( d_G(v)+d_G^s(v) \Big) \leq 2m(G)+2c(G)=n(n+1)+2(k-1),$$
which yields the final contradiction as $k\geq2$.
\end{proof}

\setcounter{claim}{0}
In order to prove Theorem~\ref{thm:char}, we need the following lemma.
\begin{lemma}\label{lemma}
If $G$ is an edge-colored graph on $n$ vertices, $m(G)+c(G)\geq\binom{n+1}{2}+k-1$ and $G$ contains exactly $k$ rainbow triangles, then $m(G)+c(G)=\binom{n+1}{2}+k-1$ and $G$ is complete.
\end{lemma}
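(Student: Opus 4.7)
The plan has two parts matching the two conclusions. The first part, $m(G)+c(G)=\binom{n+1}{2}+k-1$, is immediate from Theorem~\ref{thm:m+c}: if $m(G)+c(G)\geq\binom{n+1}{2}+k$, then applying that theorem with parameter $k+1$ would give $G$ at least $k+1$ rainbow triangles, contradicting the assumption of exactly $k$.

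For the second part I argue by contradiction, assuming $G$ has a non-edge $uv$. The overall plan is to add $uv$ to $G$ with a chosen color $c_0$ to produce a graph $G'$ with exactly $k$ rainbow triangles and $m(G')+c(G')>m(G)+c(G)$, which would contradict Theorem~\ref{thm:m+c} applied to $G'$. Writing
\[
T_{uv}=\{w\in V(G)\setminus\{u,v\}:uw,vw\in E(G),\,c(uw)\neq c(vw)\}
\]
for the common neighbors of $u$ and $v$ whose triangle with $uv$ would be rainbow under a fresh color, two sub-cases are handled directly. If $T_{uv}=\emptyset$, assigning $uv$ a brand-new color leaves the number of rainbow triangles at $k$ while $m+c$ grows by $2$, contradicting Theorem~\ref{thm:m+c}. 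If some existing color $c_0$ lies in $\bigcap_{w\in T_{uv}}\{c(uw),c(vw)\}$, then assigning $uv$ that $c_0$ forces each triangle $\{u,v,w\}$ with $w\in T_{uv}$ to have $c_0$ repeat one of the other two colors, so $G'$ again has $k$ rainbow triangles while $m+c$ grows by $1$, again contradicting Theorem~\ref{thm:m+c}.

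The remaining configuration, in which $T_{uv}\neq\emptyset$ yet $\bigcap_{w\in T_{uv}}\{c(uw),c(vw)\}=\emptyset$ for every non-edge $uv$, is the main obstacle, and my plan is to attack it by induction on $k$ with the base cases $k\in\{0,1\}$ given by the extremal characterizations of Fujita, Ning, Xu, and Zhang~\cite{funixuzh}. For the inductive step I pick an edge $e$ of $G$ contained in some rainbow triangle and apply Theorem~\ref{thm:m+c} to $G-e$: this forces $e$ to lie in at most two rainbow triangles if $c(e)$ is unique in $G$, and in at most one if $c(e)$ is non-unique. If either (i) $c(e)$ is non-unique and lies in exactly one rainbow triangle, or (ii) $c(e)$ is unique and lies in exactly two rainbow triangles, then $G-e$ satisfies the lemma's hypothesis for $k-1$ or $k-2$ respectively, and the inductive hypothesis forces $G-e$---and therefore $G$---to be complete, a contradiction.

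The delicate residual case is where every edge of every rainbow triangle has unique color and lies in exactly one rainbow triangle, so that the rainbow triangles are pairwise edge-disjoint and use $3k$ distinct unique colors. This is the crux of the proof, and I expect it to be the hardest step. Here I plan to exploit the vertex inequality $d_G(v)+d_G^s(v)\geq n+\ell_v$, obtained by applying Theorem~\ref{thm:m+c} to $G-v$ (with $\ell_v$ the number of rainbow triangles through $v$); summing this over $v\in V(G)$ gives $\sum_v(d_G(v)+d_G^s(v))\geq n^2+3k$, which combined with $\sum_v(d_G(v)+d_G^s(v))\leq 2m(G)+2c(G)=n^2+n+2(k-1)$ and the bounds $d(u),d(v)\leq n-2$ imposed by the non-edge $uv$ should pin the structure around $u$ and $v$ down tightly enough to contradict the emptiness of $\bigcap_{w\in T_{uv}}\{c(uw),c(vw)\}$. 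That global-to-local structural argument is what I expect to be the main technical challenge.
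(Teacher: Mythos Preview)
Your first conclusion and the two easy sub-cases (empty $T_{uv}$, or a common colour in $\bigcap_{w\in T_{uv}}\{c(uw),c(vw)\}$) are correct and nicely argued. The problem is the residual case: what you write there is a hope, not a proof. The vertex inequality you obtain from Theorem~\ref{thm:m+c} is only $d_G(v)+d_G^s(v)\geq n+\ell_v$; summing yields $n^2+3k\leq n(n+1)+2(k-1)$, i.e.\ $k\leq n-2$, which is no contradiction. The extra information you list (the non-edge $uv$, the structure of $T_{uv}$, edge-disjointness of the rainbow triangles) does not obviously close this gap, and you do not explain how it would.

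The paper avoids your case analysis entirely by inducting on $n$ rather than on $k$ via $G-e$. The key is to strengthen the vertex inequality to $d_G(v)+d_G^s(v)\geq n+\ell_v+1$. The equality case $d_G(v)+d_G^s(v)=n+\ell_v$ is ruled out as follows: it forces $m(G-v)+c(G-v)=\binom{n}{2}+(k-\ell_v)-1$ with $G-v$ having exactly $k-\ell_v$ rainbow triangles, so by induction $G-v$ is complete; since $G$ is not, $d_G(v)\leq n-2$ and hence $d_G^s(v)\geq \ell_v+2$. But $\ell_v+2$ $v$-saturated edges together with completeness of $G-v$ give $\binom{\ell_v+2}{2}>\ell_v$ rainbow triangles through $v$, a contradiction. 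With the $+1$ in hand, summation gives $n(n+1)+3k\leq n(n+1)+2(k-1)$, which is impossible for $k\geq 0$. This single claim replaces the whole of your cases A--C and the induction on $k$; the saturated-edge observation (any two $v$-saturated edges, together with the edge between their other endpoints in the complete $G-v$, form a rainbow triangle) is the idea you are missing.
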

\begin{proof}
Suppose, for a contradiction, that the statement is false, and let $G$ be a counterexample chosen such that  
the number of vertices $n$ is as small as possible.
Since $G$ contains exactly $k$ rainbow triangles, Theorem~\ref{thm:m+c} implies that $m(G)+c(G)=\binom{n+1}{2}+k-1$, so we may assume that $G$ is not complete.
\begin{claim}\label{claimLemma}
For every vertex $v$ of $G$ that is contained in $\ell\geq0$ rainbow triangles, it holds that $d_G(v)+d_G^s(v)\geq n+\ell+1.$
\end{claim}
\begin{proof}[Proof of Claim~\ref{claimLemma}]
Suppose, for a contradiction, that there is a vertex $v$ that is contained in exactly $\ell$~rainbow triangles and $d_G(v)+d_G^s(v)\leq n+\ell$.
If $d_G(v)+d_G^s(v)\leq n+\ell-1$, then $m(G-v)+c(G-v)\geq \binom{n}{2}+k-\ell$, and thus, by Theorem~\ref{thm:m+c}, $G-v$ contains at least $k-\ell+1$ rainbow triangles, which implies that $G$ contains at least $k+1$ rainbow triangles, a contradiction.
So, we may assume that $d_G(v)+d_G^s(v)=n+\ell$. 
This implies that $m(G-v)+c(G-v)\geq \binom{n}{2}+k-\ell-1$.
Since $G-v$ contains exactly $k-\ell$ rainbow triangles, the choice of $G$ implies that $G-v$ must be a complete graph. Since $G$ is not complete, $d_G(v)\leq n-2$.
Hence, $d_G^s(v)\geq \ell+2$ and thus $v$ is contained in at least $\binom{\ell+2}{2}>\ell$ rainbow triangles in $G$, a contradiction, which proves the claim.
\end{proof}
Now, similarly as in the proof of Theorem~\ref{thm:m+c}, we obtain that 
$$n(n+1)+3k\leq \sum\limits_{v\in V(G)}\Big( d_G(v)+d_G^s(v) \Big)\leq 2m(G)+2c(G)=n(n+1)+2(k-1),$$
which yields the final contradiction as $k\geq1$.
\end{proof}
\setcounter{claim}{0}
\begin{proof}[Proof of Theorem~\ref{thm:char}]
Suppose, for a contradiction, that the statement is false, and let $G$ be a counterexample chosen such that 
\begin{enumerate}[label=(\roman*)]
\item $k$ is as small as possible,
\item subject to (i), the number of vertices $n$ is as small as possible, and 
\item subject to (i) and (ii), the number of edges $m(G)$ is as small as possible.
\end{enumerate}
Since the case $k=0$ was already shown in~\cite{funixuzh}, we may assume that $k\geq1$ and that $n\geq4$.
By Lemma~\ref{lemma}, $G$ is complete and $m(G)+c(G)=\binom{n+1}{2}+k-1$.
\begin{claim}\label{claimEdge}
Each color of a rainbow triangle appears on only one edge in $G$.
\end{claim}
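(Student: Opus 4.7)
The plan is to prove the claim by contradiction via edge deletion. I would suppose $T=xyz$ is a rainbow triangle of $G$ with $c(xy)=c_1$, and that the color $c_1$ also appears on some other edge $e\neq xy$ of $G$. The key move is to delete $xy$: setting $G':=G-xy$, the color $c_1$ survives on $e$, so no color is lost and we have $c(G')=c(G)$ while $m(G')=m(G)-1$. Since $m(G)+c(G)=\binom{n+1}{2}+k-1$ (guaranteed by Lemma~\ref{lemma}), this gives
\[
m(G')+c(G')=\binom{n+1}{2}+k-2=\binom{n+1}{2}+(k-1)-1.
\]

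Next I would count rainbow triangles in $G'$. Every rainbow triangle of $G'$ is a rainbow triangle of $G$ too, and $T$ itself is destroyed by the removal of $xy$, so $G'$ contains at most $k-1$ rainbow triangles. On the other hand, Theorem~\ref{thm:m+c} applied to $G'$ with parameter $k-1$ (with the case $k=1$ being vacuous) yields at least $k-1$ rainbow triangles. Hence $G'$ contains exactly $k-1$ rainbow triangles.

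Finally, I would invoke Lemma~\ref{lemma} on $G'$: since $m(G')+c(G')=\binom{n+1}{2}+(k-1)-1$ and $G'$ has exactly $k-1$ rainbow triangles, the lemma forces $G'$ to be complete. This is a direct contradiction, because $G'=G-xy$ is missing the edge $xy$. The main conceptual point, and the reason the argument closes so quickly, is that the redundancy of $c_1$ is precisely what guarantees $c(G')=c(G)$ after deletion, so that $m(G')+c(G')$ still lands exactly on the extremal threshold required to invoke Lemma~\ref{lemma}; if $c_1$ were unique in $G$, the deletion would drop the color count by one and the bound would no longer suffice.
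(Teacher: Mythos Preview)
Your proof is correct and follows essentially the same line as the paper's: delete the offending edge, verify that $m+c$ drops by exactly one so that the graph sits on the $(k-1)$-threshold with exactly $k-1$ rainbow triangles, and derive a contradiction from the forced completeness of $G-xy$. The only cosmetic difference is that the paper invokes the minimality of the counterexample (so that Theorem~\ref{thm:char} holds for $k-1$, giving $G-xy\in\mathcal{G}_{k-1}$ and hence completeness), whereas you appeal directly to Lemma~\ref{lemma}; both routes yield the same contradiction.
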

\begin{proof}[Proof of Claim~\ref{claimEdge}]
Suppose, for a contradiction, that there is an edge $e$ with $c(G)=c(G-e)$ that is contained in a rainbow triangle of $G$.
Hence, $m(G-e)+c(G-e)=\binom{n+1}{2}+k-2$, and thus, by Theorem~\ref{thm:m+c} and since $e$ is contained in one of the $k$ rainbow triangles of $G$, $G-e$ contains exactly $k-1$ rainbow triangles. 
The choice of $G$ implies that $G-e\in\mathcal{G}_{k-1}$ and thus $G-e$ must be complete, a contradiction, which proves the claim.
\end{proof}
Now, let $v_1v_2v_3$ be a rainbow triangle in $G$.
Let $H$ arise from $G$ by recoloring the edge $v_1v_2$ with the color of the edge $v_2v_3$. 
By Claim~\ref{claimEdge}, this operation does not create a new rainbow triangle and deletes exactly one rainbow triangle of $G$.
Therefore, $H$ contains exactly $k-1$ rainbow triangles and $m(H)+c(H)=\binom{n+1}{2}+k-2$.
By the choice of $G$, we obtain $H\in\mathcal{G}_{k-1}$.
Thus, there is a non-trivial partition $(V_1,V_2)$ of the vertex set of $H$ such that the edges between $V_1$ and $V_2$ are all colored the same in $H$, and $H[V_1]\in\mathcal{G}_i$, $H[V_2]\in\mathcal{G}_j$ for some $i,j\geq 0$ with $i+j=k-1$.
Since $n\geq4$, we may assume that $\{v_1,v_2,v_3\}\subseteq V_1$. 
This implies that the edges between $V_1$ and $V_2$ are all colored the same in $G$, and that $G[V_2]\in\mathcal{G}_{j}$.
The choice of $G$ implies that $G[V_1]\in\mathcal{G}_{i+1}$, and thus $i+1+j=k$.
It follows that $G\in\mathcal{G}_k$, which completes the proof.
\end{proof}

The proof of Theorem~\ref{thm:sum} relies on a relation between directed triangles in digraphs and rainbow triangles in edge-colored graphs, similar to an approach in~\cite{linixuzh}.
However, a more careful analysis is needed as we seek multiple rainbow triangles. 
In order to be self-contained, we include all the details.

For a digraph $D$, we denote by $a(D)$ the number of arcs, by $N^+_D(v)$ and  $N^-_D(v)$ the out- and in-neighborhood, and by $d^+_D(v)$ and $d^-_D(v)$ the out- and in-degree, of a vertex $v$ in $V(D)$, respectively. 
Let the {\it out-component number} $\omega^+_D(v)$ denote the number of weak components of $D[N_D^+(v)]$ for a vertex $v$ in $D$.
We can assign colors to the arcs of $D$ in the following way. 
For $v\in V(D)$ and a weak component $H$ of $D[N^+(v)]$, we color all arcs from $v$ to $H$ the same with a new color. 
That is, two arcs $(w,x)$ and $(y,z)$ have the same color if and only if $w=y$, and $x$ and $z$ are in the same weak component of $D[N^+(y)]$. 
We call the underlying undirected graph with the same edge coloring the {\it associated colored graph of $D$}. 

We suitably extend a result of~\cite{linixuzh} for our purposes.
\setcounter{claim}{0}
\begin{lemma}\label{thm:directed}
If $D$ is an oriented graph on $n$ vertices such that $a(D)+\sum\limits_{v\in V(D)} \omega_D^+(v)\geq \binom{n+1}{2}+k-1$ for a positive integer $k$, then $D$ contains $k$ directed triangles.
\end{lemma}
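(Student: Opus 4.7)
The plan is to reduce Lemma~\ref{thm:directed} to Theorem~\ref{thm:m+c} by passing to the associated colored graph $G$ of $D$ and then showing that rainbow triangles in $G$ are in bijective correspondence with directed triangles in $D$.

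First I would observe that, since $D$ is an oriented graph (hence has no digon), the underlying undirected graph of $D$ has exactly $a(D)$ edges, so $m(G)=a(D)$. Next, by the definition of the coloring, each vertex $v$ contributes exactly $\omega_D^+(v)$ distinct colors, namely one color per weak component of $D[N_D^+(v)]$, and colors contributed by different vertices are distinct because a color records its source. Therefore $c(G)=\sum_{v\in V(D)}\omega_D^+(v)$, which together with the previous identity turns the hypothesis into
\[
m(G)+c(G)\geq\binom{n+1}{2}+k-1.
\]
Applying Theorem~\ref{thm:m+c} to $G$ then yields at least $k$ rainbow triangles in $G$.

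The main point is then to convert these into directed triangles of $D$. Fix any triangle on three vertices $\{x,y,z\}$ in the underlying graph; since $D$ is oriented, the three arcs between them form either a directed (cyclic) triangle or a transitive triangle. In the directed case, the three arcs emanate from three different sources, so by definition of the coloring the three edges receive three distinct colors, and the triangle is rainbow. In the transitive case, there is a unique source $s$ whose two out-arcs in the triangle go to the remaining two vertices; but those two vertices are connected by an arc inside the triangle, hence lie in the same weak component of $D[N_D^+(s)]$, forcing the two $s$-edges of the triangle to share a color, so the triangle is not rainbow. Consequently, rainbow triangles in $G$ are exactly the directed triangles of $D$, and the $k$ rainbow triangles delivered by Theorem~\ref{thm:m+c} translate into $k$ directed triangles in $D$, completing the proof.

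I expect the only subtlety to be the careful bookkeeping in the orientation case analysis, in particular verifying that in a transitive triangle the two vertices reached from the source always end up in the same weak component of the source's out-neighborhood regardless of the direction of the third arc. Once this is in place, the reduction itself is immediate.
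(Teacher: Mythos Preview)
Your proposal is correct and follows essentially the same approach as the paper: pass to the associated colored graph, note $m(G)=a(D)$ and $c(G)=\sum_v\omega_D^+(v)$, apply Theorem~\ref{thm:m+c}, and then verify that a triangle is rainbow in $G$ if and only if it is directed in $D$ via the directed/transitive dichotomy. Your treatment of the transitive case (observing that the two out-neighbors of the source are adjacent and hence lie in the same weak component of its out-neighborhood) is exactly the content of the paper's one-line appeal to the construction of $G$.
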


\begin{proof}
Let $D$ be as in the statement of the theorem and let $G$ be the associated colored graph of $D$. 
We show that for any three distinct vertices $u,v,$ and $w$ of $D$, $uvw$ is a directed triangle in $D$ if and only if $uvw$ is a rainbow triangle in $G$. 
If $uvw$ is a directed triangle in $D$, by the construction of the associated colored graph, we know that all the edges of the underlying triangle are colored pairwise differently.
Hence, $uvw$ is a rainbow triangle in $G$. 
Now, assume that $uvw$ is a rainbow triangle in $G$, and $uvw$ is not a directed triangle in $D$. 
Hence, we may assume that one of the vertices dominates the others, say $u$.
By the construction of $G$, the edges $uv$ and $uw$ are colored with the same color, a contradiction. 
	
Since $m(G)=a(D)$ and $c(G)=\sum\limits_{v\in V(D)} \omega_D^+(v)$, we obtain that
$$m(G)+c(G)=a(D)+\sum\limits_{v\in V(D)} \omega_D^+(v)\geq \binom{n+1}{2}+k-1.$$
Hence, Theorem \ref{thm:m+c} implies that $G$ contains at least $k$ rainbow triangles. Since every rainbow triangle in $G$ corresponds to a directed triangle in $D$, we obtain that $D$ contains at least $k$ directed triangles. 
\end{proof}

We proceed to the proof of Theorem~\ref{thm:sum}, which is based on Lemma~\ref{thm:directed}.
\begin{proof}[Proof of Theorem~\ref{thm:sum}]
Suppose, for a contradiction, that the statement is false and let $G$ be a counterexample chosen such that 
\begin{enumerate}[label=(\roman*)]
\item $k$ is as small as possible,
\item subject to (i), the number of edges $m(G)$ is as small as possible.
\end{enumerate}

The case for $k=1$ was proved in~\cite{linixuzh}. Hence, we may assume that $k\geq2$, and, thus, by the choice of $G$, we can find at least $k-1$ rainbow triangles in $G$.

\begin{claim}\label{claim3}
All rainbow triangles in $G$ are edge-disjoint.
\end{claim}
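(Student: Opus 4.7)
The plan is to delete the shared edge and exploit the fact that this destroys two rainbow triangles at once while decreasing $\sum d^c_G(v)$ by at most $2$. Such a favorable trade-off will contradict either the minimality of $m(G)$ or the minimality of $k$, depending on how much $\sum d^c$ actually drops.

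First I would dispose of the case $k=2$: by the minimality of $k$ applied to the parameter $k-1$, $G$ contains at least $k-1=1$ rainbow triangle, and being a counterexample it contains at most $k-1=1$, so there is a unique rainbow triangle in $G$ and the claim holds vacuously. Thus assume $k\geq 3$, and suppose for a contradiction that two distinct rainbow triangles $T_1$ and $T_2$ share an edge $e=uv$. Deleting $e$ affects the color degree only at $u$ and $v$, and at each of them by at most one, so
\[
\sum_{w\in V(G-e)} d^c_{G-e}(w) \;\geq\; \sum_{w\in V(G)} d^c_{G}(w) - 2 \;\geq\; \binom{n+1}{2}+k-3.
\]

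I would then split on the value of this sum. If $\sum_w d^c_{G-e}(w)\geq\binom{n+1}{2}+k-1$, then $G-e$ satisfies the hypothesis of Theorem~\ref{thm:sum} with the same parameter $k$ but has fewer edges than $G$, so by the minimality of $m(G)$ it contains at least $k$ rainbow triangles; hence so does $G$, a contradiction. Otherwise $\sum_w d^c_{G-e}(w)\geq\binom{n+1}{2}+(k-2)-1$; since $k-2\geq 1$, the minimality of $k$ lets us apply Theorem~\ref{thm:sum} to $G-e$ with parameter $k-2$, producing at least $k-2$ rainbow triangles in $G-e$. However, $G$ contains exactly $k-1$ rainbow triangles (by the minimality of $k$), and both $T_1$ and $T_2$ use $e$, so $G-e$ contains at most $(k-1)-2=k-3$ rainbow triangles, contradicting the lower bound of $k-2$. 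I do not foresee a significant obstacle beyond verifying that the boundary condition $k-2\geq 1$ is satisfied, which is precisely why the case $k=2$ must be handled separately.
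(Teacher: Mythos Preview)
Your proof is correct and follows the same deletion argument as the paper: remove the shared edge, bound the color-degree sum from below, and invoke minimality to count rainbow triangles in $G-e$. The paper is more streamlined---it directly uses $\sum_w d^c_{G-e}(w)\geq\binom{n+1}{2}+(k-2)-1$ to obtain at least $k-2$ rainbow triangles in $G-e$ and hence at least $k$ in $G$, so your Case~A split is unnecessary (and the separate $k=2$ treatment is likewise avoidable, since ``at least $k-2=0$ triangles'' is trivial).
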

\begin{proof}
Suppose there is an edge $e$ that is contained in at least two rainbow triangles in $G$.
Then, $\sum\limits_{v\in V(G-e)} d^c_{G-e}(v)\geq \binom{n+1}{2}+k-3$, which implies that $G-e$ contains at least $k-2$ rainbow triangles by the choice of $G$.  
Hence, $G$ contains at least $k$ rainbow triangles, a contradiction.
\end{proof}

\begin{claim}\label{claim4}
G does not contain a monochromatic $P_4$.
\end{claim}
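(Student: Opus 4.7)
Suppose, for a contradiction, that $G$ contains a monochromatic $P_4$ with vertices $v_1v_2v_3v_4$ whose three edges all carry the same color, say $c$. The plan is to delete the middle edge $e=v_2v_3$ and observe that this deletion is ``free'' with respect to the hypothesis, so that the minimality in the choice of $G$ yields the desired contradiction exactly as in the proof of Claim~\ref{claim3}.

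The main point is that removing $e$ does not decrease any color degree. Indeed, $d^c_G(u)$ for $u\notin\{v_2,v_3\}$ is unaffected since $e$ is not incident to $u$. For $u=v_2$, the color $c$ of $e$ still appears at $v_2$ via the edge $v_1v_2$, and likewise the color $c$ still appears at $v_3$ via the edge $v_3v_4$; hence $d^c_{G-e}(v_2)=d^c_G(v_2)$ and $d^c_{G-e}(v_3)=d^c_G(v_3)$. Consequently,
\[
\sum_{v\in V(G-e)} d^c_{G-e}(v) \;=\; \sum_{v\in V(G)} d^c_G(v) \;\geq\; \binom{n+1}{2}+k-1.
\]

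Since $G-e$ has strictly fewer edges than $G$, satisfies the hypothesis with the same parameter $k$, and is on the same vertex set, the minimality of $m(G)$ in the choice of the counterexample forces $G-e$ to contain at least $k$ rainbow triangles. But every rainbow triangle of $G-e$ is also a rainbow triangle of $G$, so $G$ itself contains at least $k$ rainbow triangles, contradicting the assumption that $G$ is a counterexample. Hence no monochromatic $P_4$ can exist in $G$.

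The potential obstacle one must watch for is whether color degrees could somehow shift at vertices other than $v_2,v_3$, or whether the color $c$ could disappear at $v_2$ or $v_3$; the crucial structural fact that rules this out is precisely the presence of the two flanking edges $v_1v_2$ and $v_3v_4$ of color $c$, which is exactly what the monochromatic $P_4$ provides. Everything else is just invoking the minimality hypothesis in the same way as Claim~\ref{claim3}.
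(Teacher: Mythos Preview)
Your proof is correct and follows exactly the same approach as the paper: delete the middle edge of the monochromatic $P_4$, observe that this preserves all color degrees (since both endpoints of the deleted edge retain the color via the flanking edges), and invoke the minimality of $m(G)$. The paper's version is simply more terse.
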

\begin{proof}
Suppose, for a contradiction, that $abcd$ is a monochromatic path on 4 vertices. 
Then, $\sum\limits_{v\in V(G)} d^c_G(v)
=\sum\limits_{v\in V(G-bc)} d^c_{G-bc}(v)$, which contradicts the choice of $G$.
\end{proof}
\begin{claim}\label{claim5}
No edge of a rainbow triangle is contained in a monochromatic $P_3$.
\end{claim}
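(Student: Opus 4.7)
The plan is to follow the template of Claims~\ref{claim3} and~\ref{claim4}: assume for contradiction that the claim fails, delete one well-chosen edge, and derive that $G$ must contain $k$ rainbow triangles. Concretely, I would suppose that some edge $uv$ of a rainbow triangle $uvw$ lies on a monochromatic $P_3$. By symmetry I may assume this $P_3$ consists of the edges $uv$ and $ux$ with $c(ux)=c(uv)$ for some $x\in V(G)\setminus\{u,v\}$; since $uvw$ is rainbow we must have $c(uw)\neq c(uv)$, so in particular $x\neq w$.

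The key step is to track how $\sum_z d^c(z)$ changes when $uv$ is deleted. At $u$, the color $c(uv)$ still appears among the edges at $u$ thanks to $ux$, so $d^c_{G-uv}(u)=d^c_G(u)$; at $v$ the color degree can drop by at most $1$; at every other vertex the color degree is unchanged. Hence
\[
\sum_{z\in V(G-uv)} d^c_{G-uv}(z) \;\geq\; \sum_{z\in V(G)} d^c_G(z)-1 \;\geq\; \binom{n+1}{2}+(k-1)-1.
\]
By the minimality of $k$ in the choice of $G$, Theorem~\ref{thm:sum} holds for the parameter $k-1$, so $G-uv$ contains at least $k-1$ rainbow triangles. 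Each of these avoids the edge $uv$ and is therefore distinct from $uvw$, giving $k$ rainbow triangles in $G$ and the desired contradiction.

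The only delicate point is the color-degree bookkeeping: without the monochromatic $P_3$ assumption, deleting $uv$ could cause $\sum_z d^c(z)$ to decrease by $2$, which would be one too many to invoke the theorem at parameter $k-1$. The existence of a second edge of color $c(uv)$ incident to $u$ is exactly what caps the loss at $1$, and this is what makes the induction on $k$ go through.
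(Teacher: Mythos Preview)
Your argument is correct and follows essentially the same route as the paper: assume an edge $uv$ of a rainbow triangle also lies in a monochromatic $P_3$, delete $uv$, observe that the color-degree sum drops by at most one, apply the theorem at parameter $k-1$ via the minimality of $k$, and add back the triangle $uvw$ to reach $k$ rainbow triangles. Your write-up is in fact slightly more careful than the paper's (you use $\geq$ where the paper writes an equality that need not hold if the monochromatic $P_3$ extends to a monochromatic triangle, and you explicitly justify $x\neq w$), but the strategy is identical.
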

\begin{proof}
Suppose, for a contradiction, that $abc$ is a rainbow triangle of $G$ and that $ax$ is an edge not contained in $abc$ such that $ax$ and $ab$ are colored the same.
Since, 
$$\sum\limits_{v\in V(G-ab)} d^c_{G-ab}(v)
=\left(\sum\limits_{v\in V(G)} d^c_G(v)\right)-1\geq \binom{n+1}{2}+k-2,$$ 
$G-ab$ contains at least $k-1$ rainbow triangles by the choice of $G$.
Hence, $G$ contains at least $k$ rainbow triangles, a contradiction.
\end{proof}

We assign an orientation to $G$ as follows. 
For any monochromatic $P_3$, say $abc$, we direct the two edges away from $b$.
By Claim~\ref{claim4}, $G$ does not contain a monochromatic $P_4$ and so the remaining edges can be directed arbitrarily. 
Thus, by Claim~\ref{claim5}, the edges of a rainbow triangle can be directed arbitrarily and by Claim~\ref{claim3}, we may assume that all rainbow triangles in $G$ correspond to directed triangles. 
Let $D$ denote the resulting oriented graph.

Note that this orientation also implies that 
for an arc $(u,v)$ all edges incident to $v$ have a different color than $uv$.
This immediately implies the following.
\begin{claim}\label{claim6}
If $u$ is a vertex of $D$ and $v,w$ are adjacent out-neighbors of $u$ such that $uvw$ is not a rainbow triangle in $G$, then $uv$ and $uw$ have the same color. 
\end{claim}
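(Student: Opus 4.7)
The plan is to use the observation stated immediately before Claim~\ref{claim6}, namely that for any arc $(x,y)$ of $D$ no edge incident to $y$ shares a color with $xy$. Since $uvw$ is a triangle in $G$ that is assumed not to be rainbow, two of its three edges must agree in color; the strategy is to rule out the two possibilities involving the edge $vw$ and conclude that the remaining coincidence $c(uv)=c(uw)$ must hold.

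Concretely, I would first look at the edge $vw$ from the vantage point of the arc $(u,v)$. Since $vw$ is incident to the head $v$ of this arc, the observation yields $c(vw)\neq c(uv)$. Symmetrically, applying the observation to the arc $(u,w)$ with $vw$ incident to its head $w$ gives $c(vw)\neq c(uw)$. Combining these with the fact that $uvw$ is not rainbow forces the only remaining equality $c(uv)=c(uw)$, which is the desired conclusion.

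I do not anticipate a real obstacle here, since the claim is essentially a two-line consequence of the preceding observation. If one prefers to avoid invoking that observation, the same conclusion can be reached by a direct case analysis using the orientation rule for monochromatic paths of length two: if $c(uv)=c(vw)$ then $u,v,w$ would be a monochromatic $P_3$ centered at $v$, so both of its edges would be oriented away from $v$, contradicting the existence of the arc $(u,v)$; the case $c(uw)=c(vw)$ is excluded analogously, leaving only $c(uv)=c(uw)$.
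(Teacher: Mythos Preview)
Your proposal is correct and follows exactly the approach the paper intends: the paper states the observation that for any arc $(u,v)$ no edge at the head $v$ shares the color of $uv$, and then declares Claim~\ref{claim6} to be an immediate consequence. You have simply spelled out this immediate consequence---ruling out $c(vw)=c(uv)$ via the arc $(u,v)$ and $c(vw)=c(uw)$ via the arc $(u,w)$---which is precisely what the paper leaves implicit, and your alternative direct argument via the orientation rule for monochromatic $P_3$'s is in effect just the proof of that observation itself.
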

\begin{claim}\label{claim7}
For every vertex $u$ of $D$, $d_D^-(u)+\omega_D^+(u)\geq d^c_G(u)$.
\end{claim}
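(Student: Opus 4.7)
The goal is to bound the number of distinct colors at $u$ by splitting the edges incident to $u$ into in-arcs and out-arcs of $D$, and then bounding the contribution of each group using the orientation rule (which forbids monochromatic $P_3$'s centered at any vertex $u$ except those directed outward) and Claim~\ref{claim6}.

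First, I would argue that colors on in-arcs at $u$ are pairwise distinct and disjoint from colors on out-arcs at $u$. For any two edges $uv$ and $uw$ of the same color incident to $u$, the path $vuw$ is a monochromatic $P_3$ centered at $u$. By the orientation rule applied to this $P_3$, both edges must be directed away from $u$. This rules out two in-arcs of the same color, and also rules out an in-arc and an out-arc sharing a color. Hence the number of distinct colors on in-arcs at $u$ is exactly $d_D^-(u)$, and these colors do not appear on any out-arc at $u$.

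Next I would bound the number of distinct colors on out-arcs at $u$ by $\omega_D^+(u)$. By Claim~\ref{claim6}, any two out-neighbors $v,w$ of $u$ that are adjacent in $D$ (equivalently, in $G$) satisfy $c(uv)=c(uw)$, since $uvw$ cannot be a directed triangle in $D$ and therefore not a rainbow triangle in $G$. Iterating this along any path in $D[N_D^+(u)]$, all out-arcs from $u$ into a single weak component of $D[N_D^+(u)]$ share a common color. Thus at most one color per weak component appears on out-arcs, giving at most $\omega_D^+(u)$ distinct colors on out-arcs.

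Combining the two bounds, and using that the color classes on in-arcs and out-arcs are disjoint,
\[
d^c_G(u) \;\leq\; d_D^-(u) + \omega_D^+(u),
\]
which is the desired inequality. The main subtlety is verifying that the orientation rule really is consistent in the presence of monochromatic $P_3$'s at $u$: since Claim~\ref{claim4} forbids monochromatic $P_4$'s, any same-color pair at $u$ forms a $P_3$ whose center is unambiguously $u$, so the orientation rule forces both of its edges out of $u$, and the case analysis above is exhaustive.
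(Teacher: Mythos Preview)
Your proof is correct, and in fact streamlines the paper's argument. Both proofs handle the in-arcs the same way: the observation that an in-arc at $u$ carries a color not appearing on any other edge at $u$ is exactly the remark the paper makes just before Claim~\ref{claim6}, which you rederive from the orientation rule and Claim~\ref{claim4}.

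The difference lies in how the out-arcs are handled. The paper splits into two cases according to whether $u$ lies in a rainbow triangle. When $u$ is in a rainbow triangle $uvw$ (with $v\in N_D^+(u)$), the paper uses Claims~\ref{claim3} and~\ref{claim5} to show that $v$ is isolated in $D[N_D^+(u)]$, and only then applies Claim~\ref{claim6} to the remaining components. You bypass this case distinction entirely by invoking the property, already established when the orientation $D$ was defined, that every rainbow triangle of $G$ is a directed triangle of $D$. Since a triangle on $u$ and two of its out-neighbors is never directed, it is never rainbow, and so Claim~\ref{claim6} applies uniformly to every pair of adjacent out-neighbors. This is a genuine simplification: the information from Claims~\ref{claim3} and~\ref{claim5} has already been absorbed into the choice of orientation, so there is no need to unpack it again here.
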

\begin{proof}
First, assume that $u$ is a vertex that is not contained in a rainbow triangle of $G$.
By iteratively applying Claim~\ref{claim6}, all arcs from $u$ into one component of $D[N^+(u)]$ have the same color. 
Hence, $d_D^-(u)+\omega_D^+(u)\geq d^c_G(u)$.

Now, assume that $u$ is a vertex contained in a rainbow triangle $uvw$ of $G$ that corresponds to a directed triangle $uvw$ in $D$.
Let $D_v$ be the component of $D[N^+(u)]$ that contains $v$. 
We show that $D_v=\{v\}$.
Suppose, for a contradiction, that there is another out-neighbor $x$ of $u$ that is adjacent to $v$. 
Note that by Claim~\ref{claim5}, the edges $ux$ and $vx$ are colored differently than $uv$ in G. Since $ux$ is directed towards $x$ in $D$, the edge $vx$ is also colored differently than $ux$ in $G$. 
This implies that $uvx$ is a rainbow triangle in $G$, which contradicts Claim~\ref{claim3}.
Hence, $D_v=\{v\}$, and, therefore, similar as above, we obtain that $d_D^-(u)+\omega_D^+(u)\geq d^c_G(u)$. 
\end{proof}
By Claim~\ref{claim7}, we obtain that 
$$a(D)+\sum\limits_{v\in V(D)}\omega_D^+(v)=\sum\limits_{v\in V(D)} \left( d^-_D(v)+\omega_D^+(v)\right)\geq \sum\limits_{v\in V(G)}d^c_G(u)\geq \binom{n+1}{2}+k-1.$$
Hence, by Theorem~\ref{thm:directed}, $D$ contains at least $k$ directed triangles. By the choice of the orientation, every directed triangle in $D$ corresponds to a rainbow triangle in $G$, and, thus, $G$ contains at least $k$ rainbow triangles, which yields the final contradiction.
\end{proof}

\section{Rainbow cliques in edge-colored graphs}\label{sec:3}
The proof of Theorem~\ref{thm:Kkchar} is based on the following three lemmas. 
We first introduce some basic notation for Tur\'an graphs.

The Tur\'an graph $T_{n,k}$ is the complete $k$-partite graph on $n$ vertices with balanced partite sets. 
More precisely, for $k\leq n$ and $n=p k+i$ with $p=\lfloor\frac{n}{k}\rfloor$ and $0\leq i\leq k-1$, $T_{n,k}$ has partite sets $V_1,V_2,\ldots,V_k$, 
where $|V_j|=p+1$ for $1\leq j\leq i$ and $|V_j|=p$ for $i+1\leq j\leq k$. 
It is easy to see that the number of edges in $T_{n,k}$ is
\begin{eqnarray}\label{eq:tnk}
t_{n,k}
=\binom{k}{2}p^2+i(k-1)p+\binom{i}{2}
=\frac{k-1}{2k}\left(n^2-i^2\right)+\binom{i}{2}.
\end{eqnarray} 
We will also use the following simple fact that
\begin{eqnarray}\label{eq:difftnk}
t_{n+1,k}-t_{n,k}
=n-\frac{n-i}{k}.
\end{eqnarray}

\begin{lemma}\label{lem:kgeq5}
Let $G$ be an edge-colored complete graph on $n$ vertices that contains no rainbow $K_k$ but a rainbow $T_{n,k-2}$ as a subgraph with $n\geq k\geq 6$ and $\lfloor\frac{n}{k-2}\rfloor\geq 2$. 
If the edges of $G$ are colored with $t_{n,k-2}+1$ colors then all edges within the partite sets of the rainbow $T_{n,k-2}$ are colored with the same color and this color is not used on the rainbow $T_{n,k-2}$.
\end{lemma}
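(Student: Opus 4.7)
My overall strategy is to prove the stronger claim that no within-partite-set edge carries a color appearing on the rainbow $T_{n,k-2}$. Let $V_1,\dots,V_{k-2}$ be the partite sets of the rainbow $T_{n,k-2}$; by hypothesis each satisfies $|V_\ell|\ge\lfloor n/(k-2)\rfloor\ge 2$. Since the $t_{n,k-2}$ colors appearing on the rainbow $T_{n,k-2}$ are pairwise distinct and $c(G)=t_{n,k-2}+1$, there is exactly one ``extra'' color $c^\ast$ used in $G$ but absent from the Turán subgraph, and it must appear on at least one within-partite-set edge (otherwise $c(G)=t_{n,k-2}$). Once every within-partite-set edge is shown to avoid Turán colors, each such edge is forced to be colored $c^\ast$, which yields the lemma.

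For the contradiction, suppose some within-partite-set edge $uv\in V_i$ has color $c$ equal to the color of an edge $xy$ of the rainbow $T_{n,k-2}$, where $x\in V_a$, $y\in V_b$ and $a\ne b$. The plan is to build a rainbow $K_k$, contradicting the hypothesis on $G$. Using $k\ge 6$ (so $k-2\ge 4$), I pick $j\in\{1,\dots,k-2\}\setminus\{i,a,b\}$. Using $|V_\ell|\ge 2$, I then select $w_\ell\in V_\ell$ for $\ell\notin\{i,j\}$ satisfying $w_a\ne x$ and $w_b\ne y$, together with distinct $p,q\in V_j$, chosen so that $c(pq)=c^\ast$ whenever possible. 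Setting $S=\{u,v,p,q\}\cup\{w_\ell:\ell\ne i,j\}$ gives $|S|=k$, and $G[S]$ is a $K_k$ whose only non-Turán edges are $uv$ and $pq$. Because $x,y\notin S$, the edge $xy$ is not in $G[S]$, so the color $c$ does not reappear on any Turán edge of $G[S]$; if moreover $c(pq)=c^\ast$, then $G[S]$ is rainbow, since $c^\ast\ne c$ and $c^\ast$ is not a Turán color. This supplies the required rainbow $K_k$.

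The real obstacle is securing a $c^\ast$-edge in a partite set $V_j$ with $j\notin\{i,a,b\}$. If some $V_{j'}$ with $j'\ne i$ contains a $c^\ast$-edge but $j'\in\{a,b\}$, I re-run the construction letting $V_{j'}$ play the role of $V_j$: the freedom from having $k-2\ge 4$ partite sets, combined with $|V_\ell|\ge 2$ for each $\ell$, still allows me to avoid whichever of $x,y$ is not forced into $S$ by choosing the remaining $w_\ell$'s carefully. The hardest case is when every $c^\ast$-colored within-set edge lies in $V_i$, which forces $|V_i|\ge 3$; there I pick a $c^\ast$-edge $rs\in V_i$ sharing an endpoint with $uv$, say $r=u$, and form $S=\{u,v,s\}\cup\{w_\ell:\ell\ne i\}$, a $K_k$ whose within-partite-set edges are $uv,us,vs$ with colors $c,c^\ast,c(vs)$. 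A careful case split on $c(vs)$, together with, if needed, the symmetric construction using a $c^\ast$-edge incident to $v$ in place of $u$, should show that the coloring of the $V_i$-neighborhood of $\{u,v\}$ cannot simultaneously avoid all pairwise conflicts on these three within-set edges. I expect this final case analysis to be the main technical point of the proof.
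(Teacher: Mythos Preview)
Your strategy---pair the within-set edge $uv$ of Tur\'an color $c$ with a $c^\ast$-edge $pq$ in another class and complete to a $K_k$---is the same as the paper's, and your ``main case'' is fine: if a $c^\ast$-edge exists in some $V_j$ with $j\notin\{i,a,b\}$, your $S$ is indeed rainbow. But the other cases are not handled. First, the ``hardest case'' can be sidestepped entirely: every partite set has size at least $2$ and hence contains a within-set edge, so if all $c^\ast$-edges lie in $V_i$, then every within-set edge in any $V_m$ with $m\ne i$ is Tur\'an-colored and can serve as the new $uv$; you then never need three vertices from one class (and your assumption that some $c^\ast$-edge in $V_i$ shares an endpoint with $uv$---false once $|V_i|\ge 4$---becomes moot). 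This reduction, however, only drops you into your ``case~1'', and there the phrase ``avoid whichever of $x,y$ is not forced into $S$'' does not close the argument. The bad configuration is $\{a,b\}=\{i,j\}$ with $x\in\{u,v\}$ and $y\in\{p,q\}$: then $xy\in G[S]$ no matter how the $w_\ell$ are chosen, and you cannot simply re-pick $pq$ (perhaps every $c^\ast$-edge in $V_j$ meets $y$) or re-choose $uv$ (you may recreate the same obstruction for a new color).

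This obstruction is precisely where the paper invests its work. After fixing a $c^\ast$-edge $v_1v_2$ and a Tur\'an-colored within-set edge $v_3v_4$ in different classes, the paper first forces the color-$1$ Tur\'an edge to be one of $v_1v_3,v_1v_4,v_2v_3,v_2v_4$ (otherwise $\{v_1,v_2,v_3,v_4\}$ together with suitable $w_\ell$ already gives a rainbow $K_k$), and then brings in within-set edges $v_5v_6$ and $v_7v_8$ from \emph{two further} partite sets---this is where $k\ge 6$, i.e.\ $k-2\ge 4$, is genuinely used. A short case split on the colors of $v_5v_6,v_7v_8$ and on which cross-edges carry those colors then always produces a rainbow $K_k$ avoiding all of the pinned Tur\'an edges. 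Your two-set construction does not carry enough room for this cascade; you need the third and fourth within-set edges to finish.
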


\begin{proof}
Let $G$ be a graph as described above. 
We may assume that $1,2,\dots,t_{n,k-2}$ are the colors used to color the edges of the rainbow $T_{n,k-2}$. 
Since the edges of the graph are colored with $t_{n,k-2}+1$ colors, at least one edge within a partite set is colored with a different color, say color $0$. 

Suppose, for a contradiction, that the statement is false. 
In this case there is an edge within a partite set that is not colored with color $0$. 
Without loss of generality we may assume that the edges $v_1v_2$, $v_3v_4$, $v_5v_6$, and $v_7v_8$ lie in different partite sets and that $v_1v_2$ is colored with color $0$ and $v_3v_4$ is colored with color $1$. 
If no edge between the pairs of vertices $v_1,v_2$ and $v_3,v_4$ is colored with color $1$, the graph induced on the vertices $v_1,v_2,v_3,v_4$ and one suitable vertex from every other partite set form a rainbow $K_k$. 
This suitable choice is possible, because all partite sets contain at least $2$ vertices. 
Hence, by symmetry, we may assume that $v_1v_3$ is colored with color $1$. 
If $v_5v_6$ is colored with color $0$, the vertices $v_2,v_3,v_4,v_5,v_6$ together with a vertex of every other partite set induce a rainbow $K_k$. 
If $v_5v_6$ is colored with color $1$, the vertices $v_1,v_2,v_4,v_5,v_6$ together with a vertex of every other partite set induce a rainbow $K_k$. Therefore, we may assume that the edge $v_5,v_6$ is colored with color $2$. 

With the same argument as above, we know that one edge between the pairs of vertices $v_1,v_2$ and $v_5,v_6$ is colored with color $2$. 
If this edge is incident with $v_1$, the graph induced by $v_2,v_3,v_4,v_5,v_6$ together with one vertex of every other partite set is a rainbow $K_k$. 
Hence, by symmetry, we may assume that $v_2v_5$ is colored with color $2$. 
With the same argument as above, we know that the edge $v_7v_8$ is colored with a color  different from $0$, $1$, and $2$. 
We may assume that this edge is colored with color $3$. 
If no edge between the pairs of vertices $v_1,v_2$ and $v_7,v_8$ is colored with color $3$, the graph induced on the vertices $v_1,v_2,v_7,v_8$ and one suitable vertex of every other partite set form a rainbow $K_k$. 
By symmetry, we may assume that the edge $v_1v_7$ is colored with color $3$. 
But then the vertices $v_2,v_3,v_4,v_6,v_7,v_8$ and one vertex out of every other partite set form a rainbow $K_k$, a final contradiction. 
\end{proof}

\begin{lemma}\label{lem:rbTuran}
If $G$ is a complete graph on $n$ vertices that is edge-colored with $t_{n,k-2}+1$ colors and does not contain a rainbow $K_k$ for $n\geq k\geq6$, then $G$ contains a rainbow $T_{n,k-2}$ as a subgraph.
\end{lemma}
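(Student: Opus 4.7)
The plan is induction on $n$, with base case $n=k$ and inductive step obtained by deleting a low-saturated-degree vertex, applying the induction hypothesis, and reinserting.

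\smallskip

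\noindent\textbf{Base case ($n=k$).} Equation~(\ref{eq:tnk}) gives $t_{k,k-2}=\binom{k}{2}-2$, so $G$ is colored with $\binom{k}{2}-1$ colors on the $\binom{k}{2}$ edges of $K_k$. Hence exactly one color appears on two edges $e,f$ and every other color on a single edge. Since $T_{k,k-2}$ is $K_k$ minus a matching of size two, it suffices to exhibit a matching $M$ of size two such that $K_k\setminus M$ is rainbow. If $e$ and $f$ are vertex-disjoint, take $M=\{e,f\}$. If $e$ and $f$ share a vertex, take $M=\{e,f'\}$ for any edge $f'$ vertex-disjoint from both $e$ and $f$; such an $f'$ exists because $k\geq 6$. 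In either case only one of the offending edges survives, so $K_k\setminus M$ is a rainbow $T_{k,k-2}$.

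\smallskip

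\noindent\textbf{Inductive step ($n>k$).} For every $v\in V(G)$, $G-v$ is complete and contains no rainbow $K_k$, so by the contrapositive of Theorem~\ref{thm:xucliques} we have $c(G-v)\leq t_{n-1,k-2}+1$, whence $d_G^s(v)\geq\delta:=t_{n,k-2}-t_{n-1,k-2}$. Combining with $\sum_v d_G^s(v)\leq 2c(G)=2(t_{n,k-2}+1)$ and a short case check using Equation~(\ref{eq:difftnk}), one verifies that $n(\delta+1)>2(t_{n,k-2}+1)$ for every $n\geq k+1$ with $k\geq 6$. Hence some vertex $v_0$ attains equality $d_G^s(v_0)=\delta$, equivalently $c(G-v_0)=t_{n-1,k-2}+1$. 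The inductive hypothesis applied to $G-v_0$ yields a rainbow $T_{n-1,k-2}$ with partite sets $W_1,\ldots,W_{k-2}$, and the proof reduces to inserting $v_0$ into the unique (up to symmetry) partite set $W_{i_0}$ whose size must grow when passing from $T_{n-1,k-2}$ to $T_{n,k-2}$, so that the resulting $T_{n,k-2}$ is still rainbow.

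\smallskip

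\noindent\textbf{The extension.} The $n-1-|W_{i_0}|$ new edges from $v_0$ to $V(G)\setminus(W_{i_0}\cup\{v_0\})$ must carry pairwise distinct colors, each lying outside the fixed rainbow $T_{n-1,k-2}$. The available color pool consists of the $\delta$ colors saturated at $v_0$ and possibly one additional color $c^*$ of $G-v_0$ missing from that rainbow $T_{n-1,k-2}$, so it has size at most $\delta+1$. I will show by contradiction that some choice of $i_0$ and of representative edges at $v_0$ succeeds: otherwise the pattern of color repetitions at $v_0$ allows one to pick vertices $w_i\in W_i$ such that $\{v_0,w_1,\ldots,w_{k-2}\}$, together with a transversal through the partite sets, induces a rainbow $K_k$, contradicting the hypothesis. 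Lemma~\ref{lem:kgeq5}, applied to $G-v_0$ whenever $\lfloor(n-1)/(k-2)\rfloor\geq 2$, confines all edges of the extra color $c^*$ to a single partite set of the rainbow $T_{n-1,k-2}$, which provides the structural leverage needed for this contradiction.

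\smallskip

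The main obstacle is precisely this extension step: ensuring the required color disjointness between the edges at $v_0$ and the fixed rainbow $T_{n-1,k-2}$ for some choice of $W_{i_0}$. The residual small cases $k+1\leq n\leq 2k-4$, where Lemma~\ref{lem:kgeq5} does not directly apply because $\lfloor(n-1)/(k-2)\rfloor=1$, will be disposed of by a direct finite analysis analogous to the base case, exploiting the smallness of $n-k$ and the resulting limited number of partite sets of size greater than one.
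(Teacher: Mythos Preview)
Your plan is the paper's plan: induction on $n$, an averaging argument on saturated degrees to locate a vertex $v_0$ with $c(G-v_0)=t_{n-1,k-2}+1$ (this is exactly the paper's Case~1/Case~2 split, with your inequality $n(\delta+1)>2(t_{n,k-2}+1)$ being the computation that rules out Case~2), then apply the induction hypothesis and Lemma~\ref{lem:kgeq5} to $G-v_0$ and reinsert $v_0$. So the architecture is right.

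The gap is precisely where you say it is, and your sketch of the extension step is not yet a proof. Two concrete points. First, your reading of Lemma~\ref{lem:kgeq5} is off: it does \emph{not} confine the extra colour $c^*$ to one partite set; it says that \emph{every} intra-partite edge of the rainbow $T_{n-1,k-2}$ carries $c^*$. The leverage this actually gives is that for any two vertices $x,y$ in the same partite set, the edge $xy$ has colour $c^*\notin$ Tur\'an colours, so $\{v_0,x,y\}$ together with one vertex from each of the other $k-3$ partite sets is your $k$-vertex candidate for a rainbow $K_k$; your description ``$\{v_0,w_1,\dots,w_{k-2}\}$ together with a transversal'' has only $k-1$ vertices and does not name where the $k$-th comes from. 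Second, the range $\lfloor(n-1)/(k-2)\rfloor=1$ is not disposed of by ``a direct finite analysis analogous to the base case''. In the paper this case is handled by a short but genuine structural argument: here $d_G^s(v_0)=n-2$, so $n-2$ of the edges at $v_0$ can be chosen with pairwise distinct saturated colours; one tracks the exceptional edge $uv_0$ and the unique non-Tur\'an edge $wx$ of $G-v_0$, and either extends directly (if $u$ lies in a partite set of size~$1$) or produces a rainbow $K_k$ on $\{v_0,w,x,\ldots\}$ unless $u\in\{w,x\}$, in which case a different rearrangement of partite sets gives the rainbow $T_{n,k-2}$. This is not long, but it is not the base case either; you should write it out.
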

\begin{proof}
We prove the lemma by induction on $n$ for every $k\geq6$.
For $n=k$, we obtain that $G$ is edge-colored with $t_{n,n-2}+1=\binom{n}{2}-1$ colors and we can easily find a rainbow $T_{n,n-2}$ as a subgraph.

So suppose that $G$ is a complete graph on $n+1> k$ vertices that is edge-colored with $t_{n+1,k-2}+1$ colors and does not contain a rainbow $K_k$.
Let $q=k-2$ and $n= pq+i$ for $0\leq i\leq q-1$. 
We consider the following two cases.
\begin{case}\label{case1}
There is a vertex $v$ of $G$ such that $c(G-v)\geq t_{n,q}+1$.
\end{case}
Let $v$ be as in Case~\ref{case1}. 
If $c(G-v)\geq t_{n,q}+2$, then $G-v$ contains a rainbow $K_k$ by Theorem~\ref{thm:xucliques}, a contradiction.
Thus, we may assume that $c(G-v)= t_{n,q}+1$. By induction, we can find a rainbow $T_{n,q}$ as a subgraph in $G-v$.
Together with $c(G-v)=c(G)-d_G^s(v)$ and (\ref{eq:difftnk}), we obtain that 
$$d_G^s(v)=t_{n+1,q}-t_{n,q}=
n-\frac{n-i}{q}=n-\left\lfloor\frac{n}{q}\right\rfloor.$$

If $\lfloor\frac{n}{q}\rfloor>1$, then we know that by Lemma \ref{lem:kgeq5} all edges within the partition classes of the $T_{n,q}$ subgraph in $G-v$ are colored with the same color, a color that is not used to color the edges of the rainbow $T_{n,q}$. Hence, we can suitably add $v$ to the rainbow subgraph $T_{n,q}$ and obtain a rainbow $T_{n+1,q}$ in $G$, since $G$ has no rainbow $K_k$ as a subgraph.

If $\lfloor\frac{n}{q}\rfloor=1$, then $d_G^s(v)=n-1$. 
Hence, there are $n-1$ many edges incident with $v$ that are $v$-saturated and have pairwise different colors. 
Let $uv$ denote one edge such that the set of all other edges incident with $v$ fulfills the above. If $u$ is in a partite set of size one of the rainbow $T_{n,q}$, we can add $v$ to that partite set and we obtain a rainbow $T_{n+1,q}$ in $G$. Thus, we may assume that the vertex $u$ is in a partite set of size two. Note that there is an edge in $G-v$ that is colored with a color not used to color the rainbow $T_{n,q}$ in $G-v$. Let $wx$ denote this edge. 
If $u\not=w$ and $u\not=x$, the vertices $v$, $w$, and $x$ together with a suitable vertex of every other partite set induce a rainbow $K_k$ in $G$. Hence, by symmetry we may assume that $u=x$. Now one can see that $G$ contains a rainbow $T_{n+1,q}$ as a subgraph:  The vertices $v$ and $u$ form one partite set, and $w$ is added to a partite set of size one of the rainbow $T_{n,q}$ in $G-v$. The other partite sets are the same as in the rainbow $T_{n,q}$ in $G-v$. 

\begin{case}\label{case2}
$c(G-v)\leq t_{n,q}$ for every vertex $v$ of $G$.
\end{case}
With this assumption we obtain that
\begin{align*}
(n+1)c(G) &= \sum\limits_{v\in V(G)}\left(c(G-v)+d_G^s(v)\right) 
\\&\leq (n+1)t_{n,q}+\sum\limits_{v\in V(G)}d_G^s(v)
\\&\leq (n+1)t_{n,q}+2c(G),
\end{align*}
which implies 
\begin{alignat*}{2}
&&(n-1)(t_{n+1,q}+1)&\leq (n+1)t_{n,q}\\
&\Leftrightarrow\quad &(n-1)(t_{n+1,q}-t_{n,q}+1)&\leq 2t_{n,q}.
\end{alignat*}

Together with (\ref{eq:tnk}) and (\ref{eq:difftnk}) we obtain that
\begin{alignat*}{1}
(n-1)\left(n-\frac{n-i}{q}+1\right)
&\leq 2\left(\frac{q-1}{2q} \left(n^2-i^2\right)+\binom{i}{2}\right)
\\&= \left(1-\frac{1}{q}\right)\left(n^2-i^2\right)+i(i-1)
\end{alignat*}
and thus
\begin{alignat}{2}
&&n^2-\frac{n^2-in}{q}+n-n+\frac{n-i}{q}-1 &\leq n^2-\frac{n^2-i^2}{q}-i
\notag\\ &\Leftrightarrow &\frac{n-i+in}{q} &\leq 1+\frac{i^2}{q}-i
\notag\\ &\Leftrightarrow &n-i+in &\leq q+i^2-iq
\notag\\ &\Leftrightarrow &n &\leq \frac{i+i^2+q(1-i)}{1+i} =i+\frac{q(1-i)}{1+i} \label{eq:n}.
\end{alignat}
For $0\leq i\leq q-1$, (\ref{eq:n}) yields that $n\leq q$. 
This implies the final contradiction for Case~\ref{case2} since $n\geq k=q+2$.
\end{proof}

\begin{lemma}\label{lem:Tnkcomp}
If $G$ is an edge-colored graph on $n$ vertices such that $m(G)+c(G)=\binom{n}{2}+t_{n,k-2}+1$, and $G$ does not contain a rainbow $K_k$ for $n\geq k\geq6$, then $G$ is complete.
\end{lemma}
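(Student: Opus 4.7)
The plan is to argue by contradiction and choose a counterexample $G$ minimizing $n$. The base case $n = k$ is immediate: $c(G) \leq m(G) \leq \binom{k}{2}$ combined with $m(G) + c(G) = \binom{k}{2} + t_{k,k-2} + 1 = 2\binom{k}{2} - 1$ forces $m(G) = \binom{k}{2}$, so $G$ is already complete. For the inductive step $n > k$, the main tool will be vertex deletion coupled with Theorem~\ref{thm:xucliques}: if for some $v$ one had $m(G-v) + c(G-v) \geq \binom{n-1}{2} + t_{n-1,k-2} + 2$, then $G-v$ (and hence $G$) would contain a rainbow $K_k$, so every vertex must satisfy
\[ d_G(v) + d_G^s(v) \geq A := n - 2 + (t_{n,k-2} - t_{n-1,k-2}). \]
When some $v$ attains equality, $G - v$ satisfies the hypotheses of the lemma on $n - 1$ vertices, and the minimality of $n$ forces $G - v$ to be complete; in particular every non-edge of $G$ is incident to $v$. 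Writing $t \geq 1$ for the number of such non-edges and $p = \lfloor(n-1)/(k-2)\rfloor$, we then have $d_G^s(v) = n - 2 + t - p$, so $v$ has many $v$-saturated edges.

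Given this structure I would invoke Lemma~\ref{lem:rbTuran} on the complete graph $G - v$ to produce a rainbow $T_{n-1,k-2}$ with parts $V_1, \dots, V_{k-2}$ and, provided $\lfloor(n-1)/(k-2)\rfloor \geq 2$, apply Lemma~\ref{lem:kgeq5} to conclude that all within-part edges carry a single common color $\beta$. The set of vertices of $G - v$ that are either non-neighbors of $v$ or joined to $v$ by a non-saturated edge has size only $p + 1 - t$, which is smaller than $|V_j|$ for every part as soon as $t \geq 2$; under this assumption every part meets the set $N^s$ of $v$-saturated neighbors, and one can pick $k-1$ vertices of $N^s$ forming a rainbow $K_{k-1}$ in $G - v$ (two from a conveniently chosen part — a part of size $p+1$ if one exists, otherwise a part different from the one concentrating the bad vertices — and one from each of the others). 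Adjoining $v$ then yields a rainbow $K_k$ in $G$, because every edge from $v$ to these vertices carries a unique color automatically disjoint from the colors inside $G - v$, giving the contradiction. The sub-case $t = 1$ needs extra care because a whole part of size $p$ can be bad; there one chooses two non-saturated $v$-neighbors inside the bad part as the doubled vertices and picks the remaining $k-3$ vertices so that the colors of the two non-saturated $v$-edges avoid the $\binom{k-1}{2}$ colors of the resulting rainbow $K_{k-1}$. The small regime $k \leq n \leq 2k-4$, where Lemma~\ref{lem:kgeq5} does not directly apply, must be treated separately using that parts have size $1$ or $2$.

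The main obstacle is the remaining case in which every vertex satisfies the strict inequality $d_G(v) + d_G^s(v) \geq A + 1$: the crude sum $n(A+1) \leq \sum_v(d_G(v) + d_G^s(v)) \leq 2(m(G) + c(G))$ turns out to be consistent with the hypothesis, so a contradiction cannot come from arithmetic alone. To handle this I would refine the sum via the identity $\sum_v d_G^s(v) = 2c(G) - c_2 - 2c_3$, where $c_2$ counts colors used on a proper monochromatic star and $c_3$ counts colors whose edges have no common vertex, to force $c_2 + 2c_3$ to be very small. This says $G$ is close to rainbow-edge-colored, and combined with $c(G) \geq t_{n,k-2} + 2$ (which follows from $G$ being non-complete) one extracts a large rainbow subgraph by taking one edge per unique color; a Tur\'an-type argument on this subgraph, or an application of Theorem~\ref{thm:xucliques} to a carefully chosen sub-configuration, should then produce a rainbow $K_k$ in $G$, once again contradicting the hypothesis.
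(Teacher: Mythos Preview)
Your overall architecture matches the paper's: induction on $n$, base case $n=k$, and a dichotomy according to whether some vertex $v$ has $m(G-v)+c(G-v)$ large enough to invoke induction. The problem is an arithmetic slip that derails the second case.

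Your value of $A$ is off by one. From $m(G)+c(G)=\binom{n}{2}+t_{n,k-2}+1$ and the requirement $m(G-v)+c(G-v)\le\binom{n-1}{2}+t_{n-1,k-2}+1$ (else Theorem~\ref{thm:xucliques} gives a rainbow $K_k$), one gets
\[
d_G(v)+d_G^s(v)\ \ge\ (n-1)+(t_{n,k-2}-t_{n-1,k-2}),
\]
not $n-2+(t_{n,k-2}-t_{n-1,k-2})$. With the correct $A$, the case ``every vertex satisfies $d_G(v)+d_G^s(v)\ge A+1$'' is exactly ``$m(G-v)+c(G-v)\le\binom{n-1}{2}+t_{n-1,k-2}$ for all $v$'', and the crude double count \emph{does} give a contradiction: summing over $v$ yields $(n-2)\bigl(\binom{n}{2}+t_{n,k-2}+1\bigr)\le n\bigl(\binom{n-1}{2}+t_{n-1,k-2}\bigr)$, and the paper's computation (identical up to the shift $n\mapsto n-1$) reduces this to $n-1\le k-2$, contradicting $n\ge k$. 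Your belief that arithmetic alone is insufficient comes from having the threshold one too low, which makes the ``strict'' case vacuous; the refinement via $c_2+2c_3$ and the Tur\'an-type extraction you sketch are therefore unnecessary and need not be made rigorous.

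In the equality case your argument is also more tangled than needed. Once $G-v$ is complete and you have $d_G^s(v)\ge n-\lfloor(n-1)/(k-2)\rfloor$ (again, check the constant), pick one $v$-saturated edge per saturated color; these hit at least $d_G^s(v)$ distinct vertices of $G-v$, leaving fewer than $\lfloor(n-1)/(k-2)\rfloor$ vertices unhit. Since every part of the rainbow $T_{n-1,k-2}$ has size at least $\lfloor(n-1)/(k-2)\rfloor$, each part contains a hit vertex and some part contains two; together with Lemma~\ref{lem:kgeq5} this produces a rainbow $K_k$ directly, with no need to split into $t=1$ versus $t\ge 2$. Finally, the ``small regime'' $\lfloor(n-1)/(k-2)\rfloor=1$ does not require separate work: there $A\ge 2(n-1)-1$, and since $d_G^s(v)\le d_G(v)$ always, the bound forces $d_G(v)=n-1$, so $G$ is complete.
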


\begin{proof}
We prove the lemma by induction on $n$ for every $k\geq6$.
For $n=k$, we obtain that $m(G)+c(G)=\binom{n}{2}+t_{n,n-2}+1=\binom{n}{2}+\binom{n}{2}-1$, which implies that $G$ is complete.

So suppose that $G$ is a graph on $n+1>k$ vertices such that  $m(G)+c(G)=\binom{n+1}{2}+t_{n+1,k-2}+1$ and $G$ does not contain a rainbow $K_k$.
Let $q=k-2$ and $n= pq+i$ for $0\leq i\leq q-1$. 
As in the proof of Lemma~\ref{lem:rbTuran}, we consider the following two cases.
\setcounter{case}{0}

\begin{case}\label{case3}
There is a vertex $v$ of $G$ such that $\displaystyle m(G-v)+c(G-v)\geq \binom{n}{2}+ t_{n,q}+1$.
\end{case}
We have 
$$d_G(v)+d_G^s(v)\geq \binom{n+1}{2}-\binom{n}{2}+t_{n+1,q}-t_{n,q}
=2n-\frac{n-i}{q}
=2n-\left\lfloor\frac{n}{q}\right\rfloor,$$
because otherwise, it holds that
\begin{align*}
m(G-v)+c(G-v)&=m(G)+c(G)-(d_G(v)+d_G^s(v))\\
&\geq \binom{n+1}{2}+t_{n+1,q}+1 -\left(\binom{n+1}{2}-\binom{n}{2}+t_{n+1,q}-t_{n,q} -1\right)\\
& = \binom{n}{2}+t_{n,q}+2
\end{align*}
and by Theorem \ref{thm:xucliques}, the graph $G-v$ contains a rainbow $K_k$, a contradiction.

With the same argument we obtain that $m(G-v)+c(G-v)= \binom{n}{2}+ t_{n,q}+1$ and by induction, it holds that $G-v$ is complete. 
Hence, $G-v$ is a complete graph that is edge-colored with $t_{n,q}+1$ colors, and by Lemma \ref{lem:rbTuran}, we have that the graph $G-v$ contains a rainbow $T_{n,q}$ as a subgraph.

If $\lfloor\frac{n}{q}\rfloor=1$, we obtain $d_G(v)+d_G^s(v)\geq 2n-1$, and thus, the vertex $v$ must have degree $n$ and $G$ is complete.

So let $\lfloor\frac{n}{q}\rfloor>1$. By Lemma \ref{lem:kgeq5}, all edges of the partite sets of the rainbow $T_{n,q}$ are colored with the same color that is not used to color the rainbow $T_{n,q}$. 

Suppose for a contradiction that $G$ is not complete, that is, $d_G(v)\leq n-1$.
We obtain that
\begin{eqnarray}\label{eq:ds}
d_G^s(v)\geq 2n-\left\lfloor\frac{n}{q}\right\rfloor-d(v)\geq n+1-\left\lfloor\frac{n}{q}\right\rfloor.
\end{eqnarray}

The partitions of $T_{n,q}$ are either of size $\lceil \frac{n}{q}\rceil$ or $\lfloor\frac{n}{q}\rfloor$. 
Hence, the vertex $v$ has at least one $v$-saturated edge to every partite set and to one partite set at least two $v$-saturated edges. 
But in this case the graph induced by $v$ and those $k-1$ neighbors is a rainbow $K_k$, which yields the final contradiction for Case~\ref{case3}.   

\begin{case}\label{case4}
$\displaystyle m(G-v)+c(G-v)\leq \binom{n}{2}+t_{n,q}$ for every vertex $v$ of $G$.
\end{case}
With this assumption we obtain that
\begin{align*}
(n+1)(m(G)+c(G)) &= \sum\limits_{v\in V(G)}\Big(m(G-v)+c(G-v)+d(v)+d^s(v)\Big) 
\\&\leq (n+1)\left(\binom{n}{2}+t_{n,q}\right)+2m(G)+2c(G),
\end{align*}
which implies 
\begin{alignat*}{2}
&&(n-1)\left(\binom{n+1}{2}+t_{n+1,q}+1\right)&\leq (n+1)\left(\binom{n}{2}+t_{n,q}\right)\\
&\Leftrightarrow\quad &(n-1)(t_{n+1,q}-t_{n,q})+(n-1)n+n-1&\leq 2\left(t_{n,q}+\binom{n}{2}\right).
\end{alignat*}

Together with (\ref{eq:tnk}) and (\ref{eq:difftnk}) we have
\begin{alignat}{2}
&&(n-1)\left(n-\frac{n-i}{q}\right)+(n-1)(n+1)
&\leq 2\left(\frac{q-1}{2q} \left(n^2-i^2\right)+\binom{i}{2}\right)+n(n-1)
\notag\\&\Leftrightarrow\quad &n^2-\frac{n^2-in}{q}-n+\frac{n-i}{q}+n-1
&\leq \left(1-\frac{1}{q}\right)\left(n^2-i^2\right)+i(i-1)
\notag\\&\Leftrightarrow\quad 
&\frac{n-i+in}{q}-n+n-1
&\leq \frac{i^2}{q}-i
\notag\\&\Leftrightarrow\quad 
&\frac{n-i+in}{q}
&\leq 1+\frac{i^2}{q}-i
\notag\\&\Leftrightarrow\quad 
&n&\leq i+\frac{q(1-i)}{1+i}\label{eq:n2}.
\end{alignat}
Analogously as for equation~(\ref{eq:n}) in Lemma~\ref{lem:rbTuran}, we obtain the final contradiction for Case~\ref{case4} since for $0\leq i\leq q-1$, (\ref{eq:n2}) yields that $n\leq q$ but $n\geq k=q+2$. 
\end{proof}

\begin{proof}[Proof of Theorem~\ref{thm:Kkchar}]
This follows with Lemmas~\ref{lem:kgeq5}, \ref{lem:rbTuran},  and~\ref{lem:Tnkcomp}.
\end{proof}

\section{Concluding remarks}
As main results in this paper we gave sufficient conditions for multiple rainbow triangles in edge-colored graphs and characterized corresponding extremal graphs.
We also characterized the extremal graphs, where $m(G)+c(G)$ is maximum but that do not contain a rainbow clique of size~$k$.
In particular, this answers two questions raised in~\cite{funixuzh}.

Note that Theorem~\ref{thm:Kkchar} only characterizes the extremal graphs without a rainbow $K_k$ in Theorem~\ref{thm:xucliques} for $k\geq6$. 
For $k\in\{4,5\}$, one has to distinguish many different cases, which seems to be tedious and gives only little hope for a nice characterization. 
Figure~\ref{fig3} shows several examples of extremal graphs for $k\in\{4,5\}$, where the coloring within the partite sets can be very particular and/or the graphs are not complete.

\begin{figure}[htb]
\begin{center}
\begin{tikzpicture}

{\begin{scope}[]
\tikzstyle{enddot}=[circle,inner sep=0cm, minimum size=10pt,color=hellgrau,fill=hellgrau]
\tikzstyle{markline}=[draw=hellgrau,line width=10pt]

\def\csizes{{2,2}} 
\def\cnum{1}         
\def\radius{2}

\def\startangle{90}

\pgfmathsetmacro{\cnumminusone}{\cnum-1}
\pgfmathsetmacro{\classrange}{360/(\cnum+1)}

\foreach \i in {0,...,\cnum}{
  \pgfmathparse{\csizes[\i]}
  \pgfmathsetmacro{\gap}{\classrange/(\pgfmathresult+1)}
  
  \pgfmathsetmacro{\markangle}{0.8*\classrange}  

  \pgfmathsetmacro{\sangle}{\startangle+\i*\classrange+\classrange+0.1*\classrange}  
  \draw[markline] (\sangle:\radius) arc [radius=\radius, start angle=\sangle, delta angle=\markangle];

  \foreach \vx in {1,...,\pgfmathresult}{
    \node[hvertex] (a\i b\vx) at (\startangle+\i*\classrange+\classrange+\vx*\gap:\radius){};
  }
}

\foreach \i in {0,...,\cnumminusone}{
  \pgfmathtruncatemacro{\plusone}{\i+1}
  \foreach \j in {\plusone,...,\cnum}{
     \pgfmathparse{\csizes[\i]}
     \foreach \u in {1,...,\pgfmathresult}{
       \pgfmathparse{\csizes[\j]}
        \foreach \v in {1,...,\pgfmathresult}{
           \draw[hedge] (a\i b\u) -- (a\j b\v);
        }
     }
  }
}

\draw[ultra thick,bend right=22,blue] (a0b1) to (a0b2);
\draw[ultra thick,blue] (a0b2) to (a1b1);

\draw[ultra thick,bend right=22,red] (a1b1) to (a1b2);

\end{scope}}

{\begin{scope}[shift={(5cm,0cm)}]
\tikzstyle{enddot}=[circle,inner sep=0cm, minimum size=10pt,color=hellgrau,fill=hellgrau]
\tikzstyle{markline}=[draw=hellgrau,line width=10pt]

\def\csizes{{2,3}} 
\def\cnum{1}         
\def\radius{2}

\def\startangle{90}

\pgfmathsetmacro{\cnumminusone}{\cnum-1}
\pgfmathsetmacro{\classrange}{360/(\cnum+1)}


\foreach \i in {0,...,\cnum}{
  \pgfmathparse{\csizes[\i]}
  \pgfmathsetmacro{\gap}{\classrange/(\pgfmathresult+1)}
  
  \pgfmathsetmacro{\markangle}{0.8*\classrange}  

  \pgfmathsetmacro{\sangle}{\startangle+\i*\classrange+\classrange+0.1*\classrange}  
  \draw[markline] (\sangle:\radius) arc [radius=\radius, start angle=\sangle, delta angle=\markangle];

  \foreach \vx in {1,...,\pgfmathresult}{
    \node[hvertex] (a\i b\vx) at (\startangle+\i*\classrange+\classrange+\vx*\gap:\radius){};
  }
}

\foreach \i in {0,...,\cnumminusone}{
  \pgfmathtruncatemacro{\plusone}{\i+1}
  \foreach \j in {\plusone,...,\cnum}{
     \pgfmathparse{\csizes[\i]}
     \foreach \u in {1,...,\pgfmathresult}{
       \pgfmathparse{\csizes[\j]}
        \foreach \v in {1,...,\pgfmathresult}{
           \draw[hedge] (a\i b\u) -- (a\j b\v);
        }
     }
  }
}

\draw[ultra thick,bend right=18,color=red!45!black] (a1b1) to (a1b2);
\draw[ultra thick,bend right=18,red] (a1b2) to (a1b3);

\draw[ultra thick,bend right=22,blue] (a0b1) to (a0b2);
\draw[ultra thick,blue] (a0b2) to (a1b2);

\end{scope}}

{\begin{scope}[shift={(0cm,-5cm)}]
\tikzstyle{enddot}=[circle,inner sep=0cm, minimum size=10pt,color=hellgrau,fill=hellgrau]
\tikzstyle{markline}=[draw=hellgrau,line width=10pt]

\def\csizes{{2,2,2}} 
\def\cnum{2}         
\def\radius{2}

\def\startangle{90}

\pgfmathsetmacro{\cnumminusone}{\cnum-1}
\pgfmathsetmacro{\classrange}{360/(\cnum+1)}


\foreach \i in {0,...,\cnum}{
  \pgfmathparse{\csizes[\i]}
  \pgfmathsetmacro{\gap}{\classrange/(\pgfmathresult+1)}
  
  \pgfmathsetmacro{\markangle}{0.8*\classrange}  

  \pgfmathsetmacro{\sangle}{\startangle+\i*\classrange+\classrange+0.1*\classrange}  
  \draw[markline] (\sangle:\radius) arc [radius=\radius, start angle=\sangle, delta angle=\markangle];

  \foreach \vx in {1,...,\pgfmathresult}{
    \node[hvertex] (a\i b\vx) at (\startangle+\i*\classrange+\classrange+\vx*\gap:\radius){};
  }
}

\foreach \i in {0,...,\cnumminusone}{
  \pgfmathtruncatemacro{\plusone}{\i+1}
  \foreach \j in {\plusone,...,\cnum}{
     \pgfmathparse{\csizes[\i]}
     \foreach \u in {1,...,\pgfmathresult}{
       \pgfmathparse{\csizes[\j]}
        \foreach \v in {1,...,\pgfmathresult}{
           \draw[hedge] (a\i b\u) -- (a\j b\v);
        }
     }
  }
}

\draw[ultra thick,bend right=15,blue] (a2b1) to (a2b2);
\draw[ultra thick,blue] (a2b2) -- (a0b1);

\draw[ultra thick,bend right=15,color=green!50!black] (a1b1) to (a1b2);
\draw[ultra thick,color=green!50!black] (a1b1) -- (a0b2);

\draw[ultra thick,bend right=15,red] (a0b1) to (a0b2);

\end{scope}}

{\begin{scope}[shift={(5cm,-5cm)}]
\tikzstyle{enddot}=[circle,inner sep=0cm, minimum size=10pt,color=hellgrau,fill=hellgrau]
\tikzstyle{markline}=[draw=hellgrau,line width=10pt]

\def\csizes{{2,3,3}} 
\def\cnum{2}         
\def\radius{2}

\def\startangle{90}

\pgfmathsetmacro{\cnumminusone}{\cnum-1}
\pgfmathsetmacro{\classrange}{360/(\cnum+1)}

\foreach \i in {0,...,\cnum}{
  \pgfmathparse{\csizes[\i]}
  \pgfmathsetmacro{\gap}{\classrange/(\pgfmathresult+1)}
  
  \pgfmathsetmacro{\markangle}{0.8*\classrange}  

  \pgfmathsetmacro{\sangle}{\startangle+\i*\classrange+\classrange+0.1*\classrange}  
  \draw[markline] (\sangle:\radius) arc [radius=\radius, start angle=\sangle, delta angle=\markangle];

  \foreach \vx in {1,...,\pgfmathresult}{
    \node[hvertex] (a\i b\vx) at (\startangle+\i*\classrange+\classrange+\vx*\gap:\radius){};
  }
}

\foreach \i in {0,...,\cnumminusone}{
  \pgfmathtruncatemacro{\plusone}{\i+1}
  \foreach \j in {\plusone,...,\cnum}{
     \pgfmathparse{\csizes[\i]}
     \foreach \u in {1,...,\pgfmathresult}{
       \pgfmathparse{\csizes[\j]}
        \foreach \v in {1,...,\pgfmathresult}{
           \draw[hedge] (a\i b\u) -- (a\j b\v);
        }
     }
  }
}

\draw[ultra thick,blue] (a2b1) -- (a2b2);
\draw[ultra thick,blue] (a2b2) -- (a0b1);
\draw[ultra thick,blue] (a2b2) -- (a2b3);

\draw[ultra thick,color=green!50!black] (a1b1) -- (a1b2);
\draw[ultra thick,color=green!50!black] (a1b2) -- (a1b3);
\draw[ultra thick,color=green!50!black] (a1b2) -- (a0b2);

\draw[ultra thick,bend right=15,red] (a0b1) to (a0b2);

\draw[ultra thick,bend right=45,color=green] (a1b1) to (a1b3);
\draw[ultra thick,green] (a1b1) -- (a0b2);

\draw[ultra thick,bend right=45,cyan] (a2b1) to (a2b3);
\draw[ultra thick,cyan] (a2b3) -- (a0b1);

\end{scope}}

\end{tikzpicture}
\end{center}
\caption{\onehalfspacing 
Extremal graphs for $k=4$ (top) and $k=5$ (bottom) that do not contain a rainbow $K_k$ such that $m(G)+c(G)=\binom{n}{2}+t_{n,k-2}+1$. 
The thin lines represent the edges of a rainbow $T_{n,k-2}$ and use different colors than the colors shown in the figure.
Note that the graph on the top right is not complete.
}\label{fig3}
\end{figure}
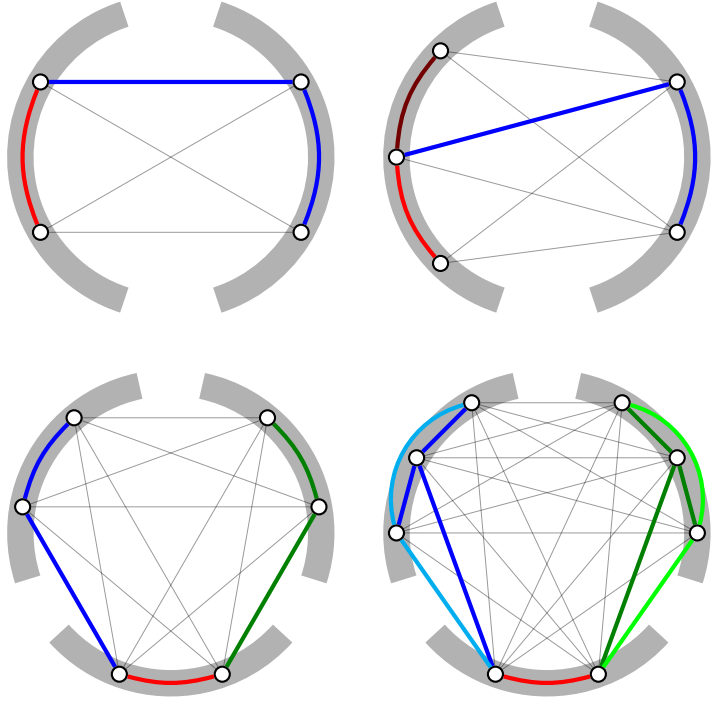

In 1962, Erd\H{o}s~\cite{er2,er3} generalized Rademacher's theorem for the existence of many triangles to arbitrarily sized cliques.
This remains open for arbitrarily sized rainbow cliques. 
That is, whether one can show that the supersaturated setting in Theorem~\ref{thm:xucliques} ensures not only one but many rainbow $K_k$'s for $k\geq4$.

At least, we can easily guarantee the existence of many rainbow $K_k$'s in the following sense.

\begin{proposition}
If $G$ is an edge-colored graph on $n$ vertices and $n\geq k\geq 4$ such that $m(G)+c(G)\geq \binom{n}{2}+t_{n,k-2}+2\ell$ for a positive integer $\ell$, then $G$ contains at least $\ell$ rainbow $K_k$'s.
\end{proposition}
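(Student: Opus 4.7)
The plan is to prove this by induction on $\ell$, using Theorem~\ref{thm:xucliques} as both the base case and the engine for producing one fresh rainbow $K_k$ at each step.

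For the base case $\ell=1$, the hypothesis reads $m(G)+c(G)\geq \binom{n}{2}+t_{n,k-2}+2$, which is exactly the statement of Theorem~\ref{thm:xucliques} and hence directly yields a rainbow $K_k$ in $G$.

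For the inductive step, suppose $\ell\geq 2$ and the result is known for $\ell-1$. Since $m(G)+c(G)\geq \binom{n}{2}+t_{n,k-2}+2\ell\geq \binom{n}{2}+t_{n,k-2}+2$, Theorem~\ref{thm:xucliques} furnishes at least one rainbow $K_k$ in $G$; pick such a copy $K$ and let $e$ be any edge of $K$. Consider the graph $G-e$: we have $m(G-e)=m(G)-1$, and deleting a single edge can reduce the number of colors by at most one, so $c(G-e)\geq c(G)-1$. Therefore
\[
m(G-e)+c(G-e)\;\geq\; m(G)+c(G)-2 \;\geq\; \binom{n}{2}+t_{n,k-2}+2(\ell-1).
\]
By the induction hypothesis applied to $G-e$ (on the same vertex set of size $n$), $G-e$ contains at least $\ell-1$ rainbow copies of $K_k$. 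None of these copies uses the edge $e$, so they are all distinct from the copy $K$ we started with. Together with $K$ this gives at least $\ell$ rainbow $K_k$'s in $G$, completing the induction.

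There is no real obstacle here beyond ensuring that the $\ell-1$ rainbow cliques produced inductively are genuinely distinct from the initial one, which is automatic because they live in $G-e$ while $K$ contains $e$. The argument is essentially a supersaturation bootstrapping in the spirit of the proof of Theorem~\ref{thm:m+c} (cf.\ Claim~\ref{claim1} there), where removing one edge of a found rainbow subgraph costs at most $2$ in $m+c$ and lets the threshold hypothesis drop by one unit of $\ell$.
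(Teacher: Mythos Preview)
Your proof is correct and follows essentially the same approach as the paper: induction on $\ell$, with Theorem~\ref{thm:xucliques} as the base case, and the inductive step carried out by deleting one edge of a found rainbow $K_k$ so that $m+c$ drops by at most $2$ and the hypothesis for $\ell-1$ applies to $G-e$. The only cosmetic difference is that the paper invokes the induction hypothesis (rather than Theorem~\ref{thm:xucliques} directly) to guarantee the first rainbow $K_k$ before choosing $e$, but the logic is the same.
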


\begin{proof}
We prove the statement by induction on $\ell$. 
For $\ell=1$, the statement is true by Theorem~\ref{thm:xucliques}.
Hence, we may assume that $\ell\geq2$ and $G$ contains at least $\ell-1$ rainbow $K_k$'s. 
Let $e$ be an edge that is contained in a rainbow $K_k$.
Then, $m(G-e)+c(G-e)\geq \binom{n}{2}+t_{n,k-2}+2\ell-2$, and thus by induction, $G-e$ contains at least $\ell-1$ rainbow $K_k$'s.
This implies that $G$ contains at least $\ell$ rainbow $K_k$'s.
\end{proof}

\pagebreak

\end{document}